\def\QEDopen{{\setlength{\fboxsep}{0pt}\setlength{\fboxrule}{0.2pt}\fbox{\rule[0pt]{0pt}{1.3ex}\rule[0pt]{1.3ex}{0pt}}}}
\def\QED{\QEDopen}
\def\endproof{\hspace*{\fill}~\QED\par\endtrivlist\unskip}
\def\be{\begin{equation}}
\def\ee{\end{equation}}
\def\ba{\begin{array}}
\def\ea{\end{array}}
\def\eqa{\begin{eqnarray}}
\def\eqe{\end{eqnarray}}
              \newtheorem{problem}{Problem}
              \newtheorem{definition}{Definition}
\newtheorem{rema}{Remark}
\newtheorem{theo}{Theorem}
\newtheorem{lemm}{Lemma}
\begin{document}

\begin{frontmatter}

\title{Quasi-Optimal Regulation of Flow Networks with Input Constraints\thanksref{footnoteinfo}} 

\thanks[footnoteinfo]{This paper was not presented at any IFAC
meeting. Corresponding author T.~W.~Scholten. Tel. +3150 363 3077.}

\author[Groningen]{Tjardo Scholten}\ead{t.w.scholten@rug.nl},    
\author[Groningen]{Claudio De Persis}\ead{c.de.persis@rug.nl},               
\author[Groningen]{Pietro Tesi}\ead{p.tesi@rug.nl}  

\address[Groningen]{Department of ENTEG, Faculty of Mathematics and Natural Sciences, University of Groningen, Nijenborgh 4, 9747 AG Groningen, The Netherlands}  

\begin{keyword}                           
Control of networks; Distributed control; Disturbance rejection; Control of constrained systems; Optimality.               
\end{keyword}                             

\begin{abstract}                          
In this work we consider a flow network for which the goal is to solve a practical optimal regulation problem in the presence of input saturation. Based on Lyapunov arguments we propose distributed controllers which guarantee global convergence to an arbitrarily small neighborhood of the desired optimal steady state while fulfilling the constraints. As a case study we apply our distributed controller to a district heating network.
\end{abstract}

\end{frontmatter}

\section{INTRODUCTION}
\label{introduction}

Regulation of interconnected dynamical systems, recently received much attention due to its many different applications, see {\it e.g.} \cite{lovisari2014stability,burbano2014distributed,Bauso20132206}.
Related examples are control of DC networks \cite{zhao2015distributed}, state regulation of heating, ventilation and air conditioning (HVAC) systems \cite{gupta2015distributed}, compartmental flow control \cite{blanchini2016compartmental}, rendezvous and formation control \cite{kim2015adaptation}, pressure regulation in hydraulic networks \cite{DePersis2011}, \cite{DePersis2014}, and frequency synchronization in power grids \cite{trip_2016_automatica}.
The models used in these examples are often similar to the ones used for flow networks, in which the control problem is to regulate the state, by assigning the flows on the links.

The stability of flow networks under time-varying disturbances can be guaranteed by means of internal model based controllers on the edges, as has been shown in \cite{Buerger2015} and \cite{de2013balancing}. It is well know that these controllers can also be implemented in a distributed fashion, such as in \cite{como2013robust}.

Port-Hamiltonian (PH) systems have also proven to be a powerful tool for the modeling and control of nonlinear networked systems \cite{van2014port}.
These PH systems have been used extensively to model physically interconnected dynamical systems as well as to synthesize controllers that ensure output regulation \cite{van2012hamiltonian,romero2013robust,ortega2008control,jayawardhana2007passivity}.

Besides stability, it is often desirable to have optimal flows, according to some cost function. Static optimization problems have been discussed in great detail (see {\it e.g.}  \cite{bertsekas1998network}, \cite{boyd2004}  and  \cite{rockafellar1984network}), and are commonly referred to as the mathematical theory of {\it network optimization}. Moreover, it is useful to solve these optimization problems in a distributed fashion, such as in \cite{gharesifard2014distributed}, in order to avoid excessive communication and computation times.
However, most real networks have to react dynamically to changes in the network, which requires feedback controllers.
This is for example done in \cite{Bauso20132206} where controllers are designed for linear systems that achieve asymptotic optimality. In \cite{burger2014duality} this is extended to non-linear systems by using passivity arguments.

Rather than including an optimality condition on the flow, \cite{trip_2016_automatica} assigns a cost function to the inputs and guarantees optimal state regulation for power networks.
This approach is also combined with optimal flows in \cite{burger2015dynamic}, which additionally considers capacity constraints on the transportation lines.
However, these constraints depend on the initial conditions, which is not desirable in networks that have, {\it e.g.} physical constraints. Accordingly, the motivation arises to consider input and state constraints in regulation problems for flow networks for which the constraints are never violated, independent of the initial conditions.

Model predictive control (MPC) handles input and state constraints in a natural way, as has been shown in \cite{danielson2013constrained}, where a capacity maximization and balancing problem is solved. However, the stability of MPC systems is often hard to analyze and running MPC algorithms is computationally intensive.
A solution that avoids the use of MPC but does not consider any optimality is provided in \cite{burger2013hierarchical}. In this paper they show that there exists a strong relation between clustering, optimal network flow problems and output agreement.

In \cite{wei2013load} necessary and sufficient conditions are provided to guarantee load balancing in the presence of input constraints but with no optimality.
It is shown that if the graph has uni-directional flow due to the saturation, a sufficient condition for output agreement is that the associated directed graph is strongly connected.
The same authors recently provided a result in which proportional-integral (PI) controllers are able to handle state constraints \cite{wei2014constrained}.

Inspired by \cite{wei2013load} and \cite{burger2015dynamic}, we consider a flow network with constant disturbances and saturated transfer rates on the links. Furthermore, we consider inputs on the nodes, which may also be subject to saturation. This is motivated by networks in which the inputs represent production rates, which have a minimal and/or maximal capacities. The main contributions of this paper are twofold. First, we provide two distributed controllers, one that regulates the input on each node and one that controls the flows on the edges. Building upon \cite{trip_2016_automatica}, \cite{Buerger2015}  and \cite{burger2015dynamic}, we show that these distributed controllers guarantee convergence to a quasi-optimal steady state, that is, to a steady state that is arbitrarily close to the optimal one. Second, we extend this result in the presence of heterogenous saturation on both inputs. In particular, we can enforce positivity constraints on the link flows, {\it i.e. }a network with unidirectional flows. In both cases we provide sufficient conditions for global asymptotic stability based on Lyapunov arguments. Finally, we apply these results to a district heating system with storage devices.

The structure of the paper is as follows. In Section \ref{setup} we introduce the model along with two problem formulations. The first problem considers optimal steady state inputs, whereas the second one is an extension, in which we additionally consider saturation on the inputs and the flows. The solution to the first problem is given in Section \ref{unconstraintcase} and the one to the second problem is given in Section \ref{constraintcase}. Finally, we present a case study in Section \ref{example}, followed by the conclusions in Section \ref{conclusion}.

\subsection{Notation}

Let $\mathbb{R}$ denote the set of real numbers and let $\mathbb{R}_{\geq0}$ be the set of non-negative real numbers. Similar to \cite{bapat2010graphs}, we define a directed graph $\mathcal{G'}$ as $\mathcal{G'}=(\mathcal{E'},\mathcal{V})$, where $\mathcal{V}$ is the set of vertices and $\mathcal{E'}$ is the set of directed edges. Furthermore, we define the undirected graph $\mathcal{G}$ as $\mathcal{G}=(\mathcal{E},\mathcal{V})$ where $\mathcal{E}$ contains the same, but undirected, vertices as in $\mathcal{E'}$. Corresponding to the direction of a directed edge, we assign a $-$ and $+$ at the ends, where it connects to a vertex, while for an undirected graph the $-$ and $+$ are assigned arbitrarily. Using this we introduce the incidence matrix $B \in \mathbb{R}^{n\times m}$, whose elements are defined as
$$b_{ik}=\left\{
     \begin{array}{ll}
       1 & :  \begin{array}{l} \text{if the $i$th node connects to}\\
       \text{the positive ($+$) end of edge $k$} \end{array}\\
       -1 & : \begin{array}{l} \text{if the $i$th node connects to}\\
       \text{the negative ($-$) end of edge $k$} \end{array}\\
       0 & : \begin{array}{l}\text{otherwise.} \end{array}
     \end{array}
   \right.$$
The Laplacian matrix is defined as $L=B B^T $ and let $\mathds{1}$ be the all ones vector. For any matrix $A$ we define $\operatorname{Im}(A)$ to be the image, $\text{ker}(A)$ to be the kernel and $A^\dagger$ to be the Moore-Penrose pseudo-inverse of $A$. For a vector space $\mathcal{S}$ we define $\mathcal{S}^\perp$ to be the orthogonal complement of $\mathcal{S}$, and let $\text{span}(\mathcal{S}) =  \left \{ {\sum_{i=1}^k \lambda_i x_i \Big| k \in \mathbb{N}, x_i  \in \mathcal{S}, \lambda _i  \in \mathbb{R}} \right \}$. For a vector $x\in\mathbb{R}^n$ we define $\|x\|$ to be a norm and the matrix norm is defined as
\be
\|A\| = \sup\{\|Ax\| : x\in \mathbb{R}^n \mbox{ with }\|x\|= 1\}. \nonumber
\ee
The $i$-th element of a vector $x$ is denoted as $(x)_i\in\mathbb{R}$, where the brackets are omitted if it causes no ambiguity.
Next we define the multidimensional saturation function $\text{sat}(x;x^-,x^+):\mathbb{R}^n\rightarrow \mathbb{R}^n$ as
\be
\text{sat}(x;x^-,x^+)_i:=\begin{cases} x_i^- &: \mbox{if } x_i \leq x_i^- \\
x_i &: \mbox{if } x_i^- < x_i < x_i^+ \\
x_i^+ &: \mbox{if }  x_i^+ \leq x_i,  \end{cases} \nonumber
\ee
where $x_i^-,x_i^+\in\mathbb{R}$. Lastly, for $a,b\in\mathbb{R}^n$ we define the inequalities ({\it e.g.} $a\leq b$) element-wise.

\section{FLOW NETWORKS}
\label{setup}

\subsection{Model}
We consider a network of physically linked undamped dynamical systems which can be represented by a graph $\mathcal{G}=(\mathcal{E},\mathcal{V})$, where $|\mathcal{E}|=m$ and $|\mathcal{V}|=n$. Each node $i$ has an input $(u_p)_i$ and a disturbance $d_i$, along with a state variable $x_i$. A second input $(u_e)_j$ is associated to each link $j$, which represents the transportation between the nodes. The dynamic model is as follows:
\begin{equation}
\begin{aligned}
\dot{x}(t)&=Bu_e(t)+u_p(t)+d\\
y(t)&=x(t)-\bar{x}(t),
\end{aligned}\label{model}
\end{equation}
where $x(t), u_p(t), y(t), d \in\mathbb{R}^n$ and $u_e(t)\in\mathbb{R}^m$. The inputs $u_p(t)$ and $u_e(t)$ are considered to be controllable and the disturbance $d$ is regarded as an unknown constant. Finally, we regard $\bar{x}(t)\in\mathbb{R}^n$ as the reference signal and we assume it to be of the form
\be
\bar{x}(t)=\bar{x}_0+\bar{x}_s t, \label{xbart}
\ee
where $\bar{x}_0\in\mathbb{R}^n$ and $\bar{x}_s\in\mathbb{R}^n$ are considered to be known constants.
\begin{rema}
The motivation to have a ramp for the reference signal $\bar{x}(t)$ comes from flow networks in which $x$ is considered to be a stored quantity. Namely, in these networks it can be desirable to have intervals during which constant charging ($\bar{x}_s>0$) or discharging ($\bar{x}_s<0$) occurs. To this end we will refer to $\bar{x}_s$ as the storage rate. Note that (\ref{xbart}) reduces to a standard constant setpoint in the case $\bar{x}_s=0$.
\end{rema}
To keep the notation as light as possible, we omit in the remainder of this paper the explicit dependence on $t$ of all the previously defined variables whenever is causes no confusion.

\subsection{Optimal feedforward input}
\label{optimalproduction}
In order to state our control problem we first define, similar to \cite{trip_2016_automatica}, \cite{Buerger2015} and \cite{burger2015dynamic}, an optimization problem whose optimum should be achieved at steady state. This is motivated by hydraulic and district heating networks (see {\it e.g. } \cite{DePersis2011}, \cite{DePersis2014} and \cite{Scholten2015}) that have producers on the nodes with heterogenous production costs. To this end, we assign a linear-quadratic input-dependent cost at each node, which is given by
\be
C_i((u_p)_i)=s_i+ r_i(u_p)_i+\frac{1}{2}q_i (u_p)_i^2, \label{individualcosts}
\ee
with $s_i, r_i \in\mathbb{R}$ and we assume that $q_i\in\mathbb{R}_{>0}$. Note that this assumption implies that (\ref{individualcosts}) is strictly convex. The total cost function we consider is given by $C(u_p)=\Sigma_{i=1}^n C_i((u_p)_i)$ which can be written as
\be
C(u_p)=s+r^T u_p+ \frac{1}{2}u_p^T Q u_p,
\ee
where $s:=\Sigma_{i=1}^n s_i$, $r=\left(
                              \begin{array}{cccc}
                                r_1 & \dots & r_n \\
                              \end{array}
                            \right)^T$ and $Q:=\text{diag}(q_1, \dots, q_n )$. Furthermore we want that the total total input matches the disturbance plus the prescribed storage rate $\bar{x}_s$ at steady state, {\it i.e.} $\mathds{1}^T(u_p+d-\bar{x}_s)=0$. For these reasons we consider the following optimization problem:
\begin{equation}
\begin{aligned}
& \underset{u_p}{\text{minimize}}
& & C(u_p) \\
& \text{subject to}
& & \mathds{1}^T(u_p+d-\bar{x}_s)=0.
\end{aligned}
\label{optimalprod}
\end{equation}
\begin{lemm}
The solution to (\ref{optimalprod}) is given by
\be
\overline{u}_p  = -Q^{-1} \left(\frac{\mathds{1}\mathds{1}^T }{\mathds{1}^T Q^{-1}\mathds{1}}(d-\bar{x}_s-Q^{-1}r )+r  \right).\label{qpopt}
\ee
\end{lemm}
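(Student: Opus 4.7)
The plan is to treat this as a standard equality-constrained strictly convex quadratic program and apply the method of Lagrange multipliers. Since $Q = \operatorname{diag}(q_1,\dots,q_n)$ with $q_i > 0$, the cost $C(u_p) = s + r^T u_p + \tfrac12 u_p^T Q u_p$ is strictly convex, the feasible set is a non-empty affine subspace, and the linear constraint satisfies a trivial constraint qualification. Therefore the Karush--Kuhn--Tucker (first-order) conditions are both necessary and sufficient, and the unique minimizer is pinned down by them.

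I would form the Lagrangian
\begin{equation}
\mathcal{L}(u_p,\lambda) = s + r^T u_p + \tfrac{1}{2} u_p^T Q u_p + \lambda\,\mathds{1}^T(u_p + d - \bar{x}_s), \nonumber
\end{equation}
take the gradient with respect to $u_p$ and set it to zero, which yields the stationarity relation $Q u_p + r + \lambda \mathds{1} = 0$, hence $u_p = -Q^{-1}(r + \lambda \mathds{1})$. Substituting this expression into the scalar equality constraint $\mathds{1}^T(u_p + d - \bar{x}_s) = 0$ gives one linear equation for $\lambda$, which can be solved explicitly using $q_i > 0$ (so $\mathds{1}^T Q^{-1}\mathds{1} > 0$) to obtain
\begin{equation}
\lambda = \frac{\mathds{1}^T(d - \bar{x}_s - Q^{-1} r)}{\mathds{1}^T Q^{-1}\mathds{1}}. \nonumber
\end{equation}

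Plugging this $\lambda$ back into the stationarity equation produces the optimizer. The only non-trivial step is cosmetic: the product $\lambda \mathds{1}$ must be rewritten as a linear map applied to the vector $(d - \bar{x}_s - Q^{-1} r)$, using the elementary identity $(\mathds{1}^T v)\mathds{1} = (\mathds{1}\mathds{1}^T) v$ for any $v \in \mathbb{R}^n$. After this rearrangement, $\overline{u}_p$ takes exactly the form \eqref{qpopt}. I do not anticipate a genuine obstacle here; the main thing to be careful about is the invertibility of $Q$ (guaranteed by $q_i>0$) and the sign conventions in the Lagrangian so that the compact matrix form of the answer matches.
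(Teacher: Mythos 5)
Your proof is correct and is precisely the standard Lagrange-multiplier argument that the paper invokes when it states ``the proof is standard and therefore omitted'': strict convexity from $q_i>0$ makes the KKT conditions necessary and sufficient, the stationarity relation and the scalar constraint determine $\lambda$ uniquely, and the identity $(\mathds{1}^T v)\mathds{1}=\mathds{1}\mathds{1}^T v$ puts the answer in the form of \eqref{qpopt}. Nothing is missing.
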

\begin{proof}
The proof is standard and therefore omitted.
\end{proof}

We point out that $\overline{u}_p $ depends on the unmeasured disturbance $d$. Keeping this in mind and having obtained the expression (\ref{qpopt}), we are ready to define our control problems.
\subsection{Control problems}
\label{controlproblem}
We define two state regulation problems fulfilling optimality condition (\ref{qpopt}) at steady state.
\begin{problem}
\label{problem1}
Design distributed controllers that regulate the flow on the edges $u_e$ and input $u_p$ at the nodes such that
\begin{align}
\lim_{t \rightarrow \infty} \left|\left|x(t)-\bar{x}(t)\right|\right|&=0 \label{xgoal}\\
\lim_{t \rightarrow \infty} \left|\left|u_p(t)-\overline{u}_p \right|\right|&=0, \label{u_p_goal}
\end{align}
where $\overline{u}_p$ is as in (\ref{qpopt}) and $\bar{x}(t)$ is as in (\ref{xbart}).
\end{problem}
We extend this problem statement by considering constraints on the input. Furthermore, motivated by physical limitations, we impose uni-directional and maximal flow constraints on the edges. Hence, Problem \ref{problem2} is formulated as follows:
\begin{problem}
\label{problem2}
For any given positive (arbitrarily small) numbers $\epsilon_1$ and $\epsilon_2$, design distributed controllers that regulate the flows on the edges $u_e$ and input $u_p$ at the nodes such that
\begin{align}
\lim_{t \rightarrow \infty} \|x(t)-\bar{x}(t)\|&<\epsilon_1 \label{xgoal2}\\
\lim_{t \rightarrow \infty} \|u_p(t)-\overline{u}_p \|&<\epsilon_2, \label{u_p_goal2}
\end{align}
 where $\overline{u}_p$ is as in (\ref{qpopt}) and $\bar{x}\in\mathbb{R}^n$ is as in (\ref{xbart}). Furthermore,
\begin{subequations}\label{saturationbounds}
\begin{align}
u_p^-\leq& u_p(t) \leq u_p^+ \label{upsatbound}\\
0\leq& u_e(t) \leq u_e^+, \label{uesatbound}
\end{align}
\end{subequations}
should hold for all $t\geq0$.
\end{problem}
\begin{rema}
In contrast to asymptotical convergence as is considered in Problem \ref{problem1}, we resort to practical convergence in order to guarantee (\ref{upsatbound}) and (\ref{uesatbound}).
\end{rema}
We will now provide a solution to Problem \ref{problem1} in Section \ref{unconstraintcase} followed by a solution to Problem \ref{problem2} in Section \ref{constraintcase}.

\section{UNCONSTRAINED CASE}
\label{unconstraintcase}

 In this section we provide a solution to Problem \ref{problem1}, which also sets the ground for the controller design and analysis that solves Problem \ref{problem2}.

\subsection{Controller design}
\label{contwithoutsat}
To solve Problem \ref{problem1} we propose two controllers, one generating $u_e$ and one providing $u_p$. The former controller takes the outputs of the incident nodes as its input and takes the form of a standard PI controller. This controller is given by
\begin{equation}
\begin{aligned}
\dot{x}_e&=\gamma_e B^Ty\\
u_e&=- \gamma_c B^Ty-\gamma_e x_e,
\end{aligned}\label{conte}
\end{equation}
where $\gamma_e,\gamma_c\in \mathbb{R}_{>0}$ are suitable gains. 
The latter controller, takes its local error measurement $y$ as an input. To guarantee an optimal input at steady state we assign a state variable $({x}_p)_i$ to each node. This state is communicated via a connected communication network that is represented by\footnote{Note that the graph represented by $L_c$ does not necessarily have to coincide with the graph represented by $L$. } $L_c$. The underlying graph of this communication networks can be directed or undirect, where we assume in the latter case that it is strongly connected. This results in the following controller
\begin{subequations}\label{contp}
\begin{align}
\dot{x}_p&=-\gamma_l L_cx_p- \gamma_pQ^{-1}y\\
u_p&=Q^{-1}(\gamma_p{x}_p-r), \label{contp_up}
\end{align}
\end{subequations}
where $\gamma_l,\gamma_p \in \mathbb{R}_{>0}$ are suitable gains. The controller is fully distributed due to the diagonal form of $Q^{-1}$ and diffusive coupling between the states $x_p$. This coupling is required in order to achieve consensus of $x_p$ and we will prove that this implies that $u_p$ converges to the optimal steady state (\ref{qpopt}) despite the presence of disturbances.

Before we state the main theorem of this section we introduce the following lemma:

\begin{lemm}
Let
\be
  \bar{x}_p=-\frac{1}{\gamma_p}\frac{\mathds{1}\mathds{1}^T}{\mathds{1}^TQ^{-1}\mathds{1}}(d-\bar{x}_s-Q^{-1}r)\label{barx_def}
  \ee
  and $\bar{x}_e$ be any solution to
  \be
  \gamma_eB\bar{x}_e=\left(I-\frac{Q^{-1}\mathds{1}\mathds{1}^T}{\mathds{1}^TQ^{-1}\mathds{1}}\right)\left(d-\bar{x}_s-Q^{-1}r\right), \label{Bbarxe}
 \ee
 then the incremental states
\begin{equation}\label{coordinatechange1}
\begin{aligned}
\tilde{x}=& \medspace  x-\bar{x}\\
\tilde{x}_p=& \medspace x_p-\bar{x}_p\\
\tilde{x}_e=& \medspace x_e-\bar{x}_e,
\end{aligned}
\end{equation}
with $x$, $x_p$ and $x_e$ as a solution to system (\ref{model}), in closed loop with controllers (\ref{conte}) and (\ref{contp}), satisfy
 \begin{equation}
\begin{aligned}
\dot{\tilde{x}}&=-\gamma_cB B^T\tilde{x}-\gamma_eB\tilde{x}_e+\gamma_pQ^{-1}\tilde{x}_p\\
\dot{\tilde{x}}_p&=-\gamma_lL_c{\tilde{x}}_p -\gamma_pQ^{-1}\tilde{x}\\
\dot{\tilde{x}}_e&=\gamma_eB^T\tilde{x}.
\end{aligned}\label{closedloopsys1}
\end{equation}
Furthermore, a solution to (\ref{Bbarxe}) always exists.
\end{lemm}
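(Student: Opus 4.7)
The plan is a direct coordinate-change verification. I substitute the controller expressions (\ref{conte})--(\ref{contp}) into the plant (\ref{model}) and re-express every state in the translated coordinates (\ref{coordinatechange1}). Because $\bar{x}_p$ and $\bar{x}_e$ are constants while $\dot{\bar{x}}=\bar{x}_s$, the derivatives obey $\dot{\tilde x}=\dot x-\bar{x}_s$, $\dot{\tilde x}_p=\dot x_p$, and $\dot{\tilde x}_e=\dot x_e$, so all that remains is to check that the constant terms produced by the shift cancel thanks to (\ref{barx_def}) and (\ref{Bbarxe}).

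The only step with nontrivial algebra is the $\dot{\tilde x}$-equation. After substitution, the linear part already matches the target $-\gamma_c BB^T\tilde x-\gamma_e B\tilde x_e+\gamma_p Q^{-1}\tilde x_p$, leaving the constant
\[
\Delta := \gamma_p Q^{-1}\bar{x}_p - Q^{-1}r + d - \bar{x}_s - \gamma_e B\bar{x}_e
\]
to be analyzed. My plan is to insert (\ref{barx_def}) into the $\gamma_p Q^{-1}\bar{x}_p$ term so that the first three summands of $\Delta$ collapse to the projection $\bigl(I-\tfrac{Q^{-1}\mathds{1}\mathds{1}^T}{\mathds{1}^TQ^{-1}\mathds{1}}\bigr)(d-\bar{x}_s-Q^{-1}r)$; by (\ref{Bbarxe}) this equals $\gamma_e B\bar{x}_e$, hence $\Delta=0$.

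The remaining two equations are almost free. For $\dot{\tilde x}_p$ I would observe that $\bar{x}_p$ in (\ref{barx_def}) is a scalar multiple of $\mathds{1}$, so $L_c\bar{x}_p=0$ because $L_c$ is a (communication) Laplacian; the $\dot{\tilde x}_e$-equation follows immediately from $y=\tilde x$.

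For the existence claim, I verify that the right-hand side of (\ref{Bbarxe}) lies in $\operatorname{Im}(B)$. Under the standing assumption that the physical graph is connected, $\operatorname{Im}(B)=\ker(\mathds{1}^T)$, so it suffices to check that $\mathds{1}^T\bigl(I-\tfrac{Q^{-1}\mathds{1}\mathds{1}^T}{\mathds{1}^TQ^{-1}\mathds{1}}\bigr)=0$, which is a one-line calculation. The main obstacle throughout is really bookkeeping --- tracking every affine term through the coordinate shift so that the cancellations dictated by (\ref{barx_def}) and (\ref{Bbarxe}) are transparent --- rather than any conceptual difficulty.
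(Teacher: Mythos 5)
Your proposal is correct and follows essentially the same route as the paper's proof: substitute the controllers into the plant, verify that the affine terms cancel by (\ref{barx_def}) and (\ref{Bbarxe}) (you make the cancellation of the constant $\Delta$ and the use of $L_c\bar{x}_p=0$ explicit, which the paper leaves as "follows directly"), and establish existence of $\bar{x}_e$ from $\operatorname{Im}(B)=\operatorname{span}(\mathds{1})^{\perp}$ together with $\mathds{1}^T\bigl(I-\tfrac{Q^{-1}\mathds{1}\mathds{1}^T}{\mathds{1}^TQ^{-1}\mathds{1}}\bigr)=0$. No gaps.
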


\begin{proof}
We combine (\ref{model}) with (\ref{conte}) and (\ref{contp}), to obtain the closed loop system
\begin{equation}
\begin{aligned}
\dot{x}=&-\gamma_cB B^T(x-\bar{x})-\gamma_eBx_e \\
&+Q^{-1}(\gamma_p{x}_p-r)+d\\
\dot{x}_p=&-\gamma_lL_c{x}_p -\gamma_pQ^{-1}(x-\bar{x})\\
\dot{x}_e=& \medspace \gamma_eB^T(x-\bar{x}).
\end{aligned}\label{nosat_eq}
\end{equation}
In light of (\ref{coordinatechange1}) it follows directly from (\ref{nosat_eq}) that (\ref{closedloopsys1}) is satisfied. Lastly we prove that there exists a $\bar{x}_e$ that satisfies (\ref{Bbarxe}). Since
$\operatorname{Im}(B)=\text{ker}(B^T)^\perp=\text{span}(\mathds{1})^\perp$ and $\mathds{1}^T\left(I-\frac{Q^{-1}\mathds{1}\mathds{1}^T}{\mathds{1}^TQ^{-1}\mathds{1}}\right)=0$, we know that there always exists a $\bar{x}_e$ that satisfies (\ref{Bbarxe}), which concludes the proof.
\end{proof}

\begin{rema}
The closed loop dynamics (\ref{closedloopsys1}) are similar to the linear version of the closed loop dynamics of power grids, as studied in \cite{trip_2016_automatica}. The main difference in the model considered here, which requires a modification of the analysis, is the lack of damping terms.
\end{rema}

We will now state the following theorem which gives sufficient conditions to solve Problem \ref{problem1}.
\begin{theo}
If the graph $\mathcal{G}$ is connected and there exists a pair of entries $q_i,q_j$ such that $q_i\neq q_j$, then controllers (\ref{conte}) and (\ref{contp}), in closed loop with (\ref{model}), solve Problem \ref{problem1}. \label{theorem1}
\end{theo}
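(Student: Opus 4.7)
The plan is to establish asymptotic convergence of the closed-loop system (\ref{closedloopsys1}) via LaSalle's invariance principle applied to the radially unbounded, positive definite storage function
\begin{equation*}
V(\tilde{x},\tilde{x}_p,\tilde{x}_e)=\tfrac{1}{2}\|\tilde{x}\|^2+\tfrac{1}{2}\|\tilde{x}_p\|^2+\tfrac{1}{2}\|\tilde{x}_e\|^2.
\end{equation*}
Differentiating along (\ref{closedloopsys1}), the cross terms $\pm\gamma_e\,\tilde{x}^TB\tilde{x}_e$ cancel by transposition, and the terms $\pm\gamma_p\,\tilde{x}^T Q^{-1}\tilde{x}_p$ cancel by symmetry of $Q^{-1}$, leaving
\begin{equation*}
\dot{V}=-\gamma_c\|B^T\tilde{x}\|^2-\gamma_l\,\tilde{x}_p^T L_c\tilde{x}_p\leq 0,
\end{equation*}
where the sign-definiteness of the second term uses that $L_c$ is a Laplacian of a connected communication graph, hence positive semidefinite with kernel $\mathrm{span}(\mathds{1})$. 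Consequently all trajectories $(\tilde{x},\tilde{x}_p,\tilde{x}_e)$ are bounded.

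Next I would invoke LaSalle's theorem: every solution converges to the largest invariant subset of
\begin{equation*}
\Omega=\{(\tilde{x},\tilde{x}_p,\tilde{x}_e)\,:\,B^T\tilde{x}=0,\ L_c\tilde{x}_p=0\}.
\end{equation*}
Connectedness of $\mathcal{G}$ gives $\ker B^T=\mathrm{span}(\mathds{1})$, so on $\Omega$ one has $\tilde{x}=\alpha\mathds{1}$ and $\tilde{x}_p=\beta\mathds{1}$ for scalar signals $\alpha,\beta$, while $\dot{\tilde{x}}_e=\gamma_eB^T\tilde{x}=0$ shows $\tilde{x}_e$ is constant on invariant trajectories.

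The crux of the proof is then to show $\alpha=\beta=0$. Substituting $\tilde{x}=\alpha\mathds{1}$ into the second equation of (\ref{closedloopsys1}) gives $\dot{\tilde{x}}_p=-\gamma_p\alpha Q^{-1}\mathds{1}$, and for invariance in $\mathrm{span}(\mathds{1})$ this derivative must itself lie in $\mathrm{span}(\mathds{1})$. This is precisely where the heterogeneity assumption enters: since some $q_i\neq q_j$, the vector $Q^{-1}\mathds{1}$ is not a scalar multiple of $\mathds{1}$, forcing $\alpha=0$, i.e.\ $\tilde{x}=0$. Injecting $\tilde{x}=0$ and $\tilde{x}_p=\beta\mathds{1}$ into $\dot{\tilde{x}}=0$ yields $\gamma_eB\tilde{x}_e=\gamma_p\beta Q^{-1}\mathds{1}$; premultiplying by $\mathds{1}^T$ and using $\mathds{1}^TB=0$ together with $\mathds{1}^TQ^{-1}\mathds{1}>0$ forces $\beta=0$, so $\tilde{x}_p=0$ as well.

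The conclusion (\ref{xgoal}) is then immediate from $\tilde{x}\to 0$, and (\ref{u_p_goal}) follows from the identity $u_p-\overline{u}_p=\gamma_p Q^{-1}\tilde{x}_p$ obtained by comparing (\ref{contp_up}) with (\ref{qpopt}) via (\ref{barx_def}). The main obstacle is the LaSalle step of the third paragraph: the storage function $V$ does \emph{not} pin down the component of $\tilde{x}_e$ in $\ker B$, and it is only the combination of (i) the structural constraint $\mathrm{Im}(B)\subset\mathrm{span}(\mathds{1})^\perp$ and (ii) the heterogeneity of $Q$ that closes the argument. Fortunately, any drift of $\tilde{x}_e$ within $\ker B$ is invisible to both the physical state $x$ and the regulated input $u_p$, so it is harmless for Problem~\ref{problem1}.
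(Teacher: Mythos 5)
Your proposal is correct and follows essentially the same route as the paper: the same quadratic Lyapunov function, the same LaSalle set $\{B^T\tilde{x}=0,\ L_c\tilde{x}_p=0\}$, the heterogeneity of $Q$ to kill the consensus component of $\tilde{x}$, and premultiplication by $\mathds{1}^T$ (equivalently the paper's identity $\tilde{x}_p=\tfrac{\gamma_e}{\gamma_p}QB\tilde{x}_e$ with $\mathds{1}^TQ^{-1}\tilde{x}_p=0$) to kill that of $\tilde{x}_p$. Your closing remark that $V$ does not control the $\ker B$ component of $\tilde{x}_e$, which is harmless for Problem~\ref{problem1}, is an accurate observation that the paper leaves implicit.
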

\begin{proof}
In order to analyse the stability of the system, we use a standard quadratic Lyapunov function
\be
V(\tilde{x},\tilde{x}_e,\tilde{x}_p)=\frac{1}{2}\|\tilde{x}\|^2+\frac{1}{2}\|\tilde{x}_p\|^2 +\frac{1}{2}\|\tilde{x}_e\|^2.
\ee
Using (\ref{closedloopsys1}) it is easy to see that its derivative is given by
\begin{equation}
\dot{V}(\tilde{x},\tilde{x}_e,\tilde{x}_p)=-\gamma_c\|B^T\tilde{x}\|^2-\gamma_l\|B_c^T{\tilde{x}}_p\|^2, \label{vdotresult}
\end{equation}
where $B_c$ is the incidence matrix of the communication graph and satisfies $B_cB_c^T=L_c$. Due to the quadratic form of $V$, it is clear that $V$ is positive definite and radially unbounded. Using LaSalle's invariance principle we can conclude that $(\tilde{x},\tilde{x}_e,\tilde{x}_p)$ converges to the largest invariant set where $\dot{V}(\tilde{x},\tilde{x}_e,\tilde{x}_p)=0$, which is given by
\be
\mathcal{S}:=\left\{(\tilde{x},\tilde{x}_e,\tilde{x}_p)|B^T\tilde{x}=0,  B_c^T{\tilde{x}}_p=0 \right\}. \label{invariantset}
\ee
Next we characterize the dynamics on this invariant set $\mathcal{S}$. These, in light of (\ref{closedloopsys1}), are given by
\begin{subequations}\label{lasalle_tot}
\begin{align}
\dot{\tilde{x}}&=-\gamma_eB\tilde{x}_e+\gamma_pQ^{-1}{\tilde{x}}_p\label{lasalle1}\\
\dot{\tilde{x}}_p&=-\gamma_pQ^{-1}\tilde{x}\label{lasalle2}\\
\dot{\tilde{x}}_e&=0.\label{xtildeezero}
\end{align} \label{closedlooponS}
\end{subequations}
Since the graphs that represent the physical interconnections and communications are both connected, we see that on  $\mathcal{S}$ both $\tilde{x}=\mathds{1}\tilde{x}^*$ and ${\tilde{x}}_p=\mathds{1}{\tilde{x}}_p^*$ are satisfied, where $\tilde{x}^*$ and $\tilde{x}_p^*$ are undetermined scalar functions.  Together with (\ref{lasalle2})
 we conclude that
\be
\gamma_p(q^{-1}_j -q^{-1}_i) {\tilde{x}}^*= 0 \quad \text{for all } i,j. \label{xdotandqiszero}
\ee
By assumption there exist an $i$ and $j$ such that $q^{-1}_j \neq q^{-1}_i$, which implies, together with (\ref{xdotandqiszero}), that $\tilde{x}^*= 0$ and therefore also that $(\dot{\tilde{x}}_p)^*= 0$. By evaluating the dynamics of (\ref{closedlooponS}) we obtain
 \begin{align}
\tilde{x}&=0\label{xtildeiszero}\\
\tilde{x}_p&=\frac{\gamma_e}{\gamma_p}QB\tilde{x}_e, \label{xtildepisqbxe}
\end{align}
from which it follows that $\mathds{1}^TQ^{-1}\tilde{x}_p=0$. Since we also know from (\ref{invariantset}) that ${\tilde{x}}_p=\mathds{1}({\tilde{x}}_p)_i$ this implies that $\tilde{x}_p=0$. From this and (\ref{contp_up}) we can now conclude that
 \begin{equation}
\bar{u}_p = -Q^{-1}\left( \frac{\mathds{1}\mathds{1}^T}{\mathds{1}^TQ^{-1}\mathds{1}}(d-\bar{x}_s-Q^{-1}r) +r\right),
\label{upisoptimal}
 \end{equation}
 which coincides with the optimal steady state input in view of (\ref{qpopt}). By (\ref{xtildeiszero}) and (\ref{upisoptimal}) we conclude that Problem \ref{problem1} is solved.
\end{proof}

\begin{rema}
By taking the gains $\gamma_p$ non-identical at different nodes, the condition $q_i\neq q_j$ in Theorem \ref{theorem1} can be relaxed to $(\gamma_p)_jq^{-1}_j \neq (\gamma_p)_iq^{-1}_i$. This implies that if $q_j=q_i$ for all $i\neq j$, applying heterogenous gains would still guarantee convergence.
\end{rema}

\section{CONSTRAINED CASE}
\label{constraintcase}
In this section we provide a solution to Problem \ref{problem2} where, compared to Problem \ref{problem1}, we additionally have constraints (\ref{saturationbounds}) on the inputs $u_p$ and $u_e$. We propose controllers that are similar to those presented in Section \ref{unconstraintcase}, while taking these additional constraints into account. To this end we modify (\ref{conte}) in order to satisfy (\ref{uesatbound}) and propose the following controller to generate $u_e$:
\begin{subequations} \begin{align}
\dot{x}_e&=\gamma_e B^Ty\\
u_e&=\text{sat}(-\gamma_cB^Ty- \gamma_ex_e;0,u_e^+),\label{contesatb}
\end{align}\label{contesat}%
\end{subequations}
where $\gamma_e, \gamma_c \in \mathbb{R}$ are appropriate gains. Note that the network has uni-directional flows since the lower bound of the saturation is identical to zero. For this reason the graph $\mathcal{G}'$, that models the physical interconnection, can be viewed as a directed one. We let the directions of the edges in $\mathcal{E}'$ be such that they coincide with the permitted flow directions.

The controller that regulates the input on the nodes $u_p$ uses the same principles as (\ref{contp}), with some additions in order to satisfy (\ref{upsatbound}). To this end, we saturate the output of this controller. However, this is not sufficient to guarantee convergence. For this reason we adjust the dynamics of the controller to
\begin{subequations}\label{Upsatcontroller}
\begin{align}
\dot{x}_p=&-\gamma_l L_c\text{sat}( x_p ; \frac{1}{\gamma_p}(Q u_p^- +r)  , \frac{1}{\gamma_p}(Q  u_p^+  +r))\nonumber\\
& -\gamma_pQ^{-1}\left(y -\gamma_cBu_e \right) \label{dotxpsat}\\
u_p=&\text{sat}(Q^{-1}(\gamma_p{x}_p-r); u_p^-  ,u_p^+),\label{upsat}
\end{align}
\end{subequations}
where $L_c$ is the Laplacian of a connected communication graph, $\gamma_c$ is as in (\ref{contesat}) and $\gamma_l, \gamma_p \in \mathbb{R}$ are appropriate gains.

\begin{rema}
Note that (\ref{dotxpsat}) has $\gamma_p\gamma_cQ^{-1}Bu_e$ as an additional term compared to (\ref{contp}). We will show that this term play a key role to prove convergence but it also causes a steady state error. Interestingly, this error can be made arbitrarily small by adjusting the gains $\gamma_c$, $\gamma_p$ and $\gamma_l$. The practical consequence of this term is that the controller additionally needs to measure the difference between all the incoming and outgoing flows. Since these measurements are available locally, controller (\ref{Upsatcontroller}) is still fully distributed.
\end{rema}

\begin{rema}
We observe that (\ref{model}) in closed loop with controllers (\ref{contesat}) and (\ref{Upsatcontroller}) is globally Lipschitz. For this reason we can conclude that a solution exists for all time $t\geq0$.
\end{rema}

Before we state our main theorem we define a change of coordinates in which we distinguish the desired steady state and the steady state deviation from the desired one, which we denote with a bar and hat, respectively. To this end, we let
\be
\begin{aligned}
\tilde{x}=& \medspace x-\bar{x}-\hat{x}&\\
\tilde{x}_e=& \medspace x_e-\bar{x}_e-\hat{x}_e\\
\tilde{x}_p=& \medspace x_p-\bar{x}_p-\hat{x}_p,
\end{aligned}\label{coordinatechange}
\ee
where $\bar{x}_p$ is as in (\ref{barx_def}), $\bar{x}_e$ is any solution to (\ref{Bbarxe})
and we define $\hat{x}$, $\hat{x}_e$ and $\hat{x}_p$ as the solution to
 \begin{subequations}
 \begin{align}
 0 =& B^T\hat{x}   \label{xerr}\\
0 =& -\gamma_e B \hat{x}_e + \gamma_p Q^{-1}\hat{x}_p \label{xeerr}\\
0 =& -\gamma_pQ^{-1}\hat{x}-\gamma_l L_c \hat{x}_p \nonumber\\
& - \gamma_p\gamma_c\gamma_eQ^{-1}B(\hat{x}_e +\bar{x}_e). \label{xperr}
 \end{align} \label{hatsss}
\end{subequations}
that has minimal Euclidean norm.

The next lemmas show that the solutions to (\ref{hatsss}) always exist and derive an incremental form for system (\ref{model}) in closed loop with  (\ref{contesat}) and (\ref{Upsatcontroller}) in suitable new coordinates.

\begin{lemm}
Solutions $\hat{x}$, $\hat{x}_e$ and $\hat{x}_p$ to (\ref{hatsss}) always exist and are given by:
\begin{align}
\hat{x}_p=&\frac{\gamma}{\gamma_p}Q\left(\gamma\bar{Q}+\Phi\right)^{-1}\bar{Q}^2 Q\tilde{d}\label{hatxp}\\
\hat{x}_e=&\frac{\gamma}{\gamma_e}B^\dagger \left(\gamma\bar{Q}+\Phi\right)^{-1}\bar{Q}^2 Q\tilde{d} \label{hatxe}\\
\hat{x}=& \gamma_c\frac{\mathds{1}\mathds{1}^TQ^{-1}}{\mathds{1}^TQ^{-1}\mathds{1}}\left(I- \gamma\left(\gamma\bar{Q}+\Phi\right)^{-1}\bar{Q} \right)\bar{Q} Q\tilde{d},\label{hatx}
\end{align}
and
\begin{subequations}
\begin{align}
\gamma:=&\gamma_p^2\frac{\gamma_c}{\gamma_l}\\
\tilde{d}:=&d-\bar{x}_s-Q^{-1}r\label{tilded}\\
\bar{Q}:=& \frac{Q^{-1}\mathds{1}\mathds{1}^TQ^{-1}}{\mathds{1}^TQ^{-1}\mathds{1}}-Q^{-1} \label{Qbar}\\
\Phi:=&-L_cQ+\frac{1}{n}\mathds{1}\mathds{1}^T. \label{Phi}
\end{align}
\end{subequations}
\label{lemma_bound_gamma}
\end{lemm}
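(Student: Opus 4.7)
The plan is to eliminate $\hat{x}$ and $\hat{x}_e$ in favour of $\hat{x}_p$ and a single scalar $\alpha$, reduce (\ref{xperr}) to a linear equation in which $\gamma\bar{Q}+\Phi$ appears as the operator to be inverted, and then recover the three claimed closed forms by inversion and back-substitution. Because (\ref{xeerr}) leaves $\hat{x}_e$ undetermined along $\ker B$, the minimum-Euclidean-norm requirement is invoked at this step, which is what brings $B^\dagger$ into (\ref{hatxe}).

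Since $\mathcal{G}$ is connected, $\ker(B^T)=\text{span}(\mathds{1})$, so (\ref{xerr}) forces $\hat{x}=\alpha\mathds{1}$ for a scalar $\alpha$ to be determined. Next, (\ref{xeerr}) yields $B\hat{x}_e=(\gamma_p/\gamma_e)\,Q^{-1}\hat{x}_p$, which is solvable iff $\mathds{1}^TQ^{-1}\hat{x}_p=0$ (since $\operatorname{Im}B=\text{span}(\mathds{1})^\perp$), and whose minimum-norm solution is $\hat{x}_e=(\gamma_p/\gamma_e)\,B^\dagger Q^{-1}\hat{x}_p$. The algebraic identity $I-\tfrac{Q^{-1}\mathds{1}\mathds{1}^T}{\mathds{1}^TQ^{-1}\mathds{1}}=-\bar{Q}Q$ recasts (\ref{Bbarxe}) as $\gamma_eB\bar{x}_e=-\bar{Q}Q\tilde{d}$, which is the form needed below.

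Substituting these into (\ref{xperr}), multiplying by $Q$ from the left, and using the constraint $\mathds{1}^TQ^{-1}\hat{x}_p=0$ to freely insert the rank-one corrector $\tfrac{1}{n}\mathds{1}\mathds{1}^T$, the equation collapses to the form $(\gamma\bar{Q}+\Phi)(\text{linear in }\hat{x}_p)=(\gamma/\gamma_p)\bar{Q}^2Q\tilde{d}+(\text{scalar in }\alpha)\mathds{1}$. Premultiplying by $\mathds{1}^T$ and invoking the key identity $\mathds{1}^T(\gamma\bar{Q}+\Phi)=\mathds{1}^T$ — a consequence of $\mathds{1}^T\bar{Q}=0$, $\mathds{1}^TL_c=0$, and $\mathds{1}^T\mathds{1}/n=1$ — together with $\mathds{1}^T\bar{Q}^2=0$, one obtains a scalar equation that pins $\alpha$ and delivers (\ref{hatx}). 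Inverting $\gamma\bar{Q}+\Phi$ in the remaining vector equation then yields (\ref{hatxp}), and back-substitution into $\hat{x}_e=(\gamma_p/\gamma_e)B^\dagger Q^{-1}\hat{x}_p$ gives (\ref{hatxe}). Consistency of the assumed constraint $\mathds{1}^TQ^{-1}\hat{x}_p=0$ is self-verifying: using the dual identity $\mathds{1}^T(\gamma\bar{Q}+\Phi)^{-1}=\mathds{1}^T$ (obtained from $(\gamma\bar{Q}+\Phi)^T\mathds{1}=\mathds{1}$) together with $\mathds{1}^T\bar{Q}=0$, one has $\mathds{1}^TQ^{-1}\hat{x}_p=(\gamma/\gamma_p)\mathds{1}^T\bar{Q}^2Q\tilde{d}=0$.

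The principal obstacle is establishing that $\gamma\bar{Q}+\Phi$ is non-singular. The useful structural facts are that $\bar{Q}$ is symmetric negative semidefinite with $\ker\bar{Q}=\text{span}(\mathds{1})$ (Cauchy–Schwarz in the $Q^{-1}$-inner product), and that any $v\in\ker(\gamma\bar{Q}+\Phi)$ must satisfy $\mathds{1}^Tv=0$, obtained by taking $\mathds{1}^T$ of $(\gamma\bar{Q}+\Phi)v=0$ and using $\mathds{1}^T(\gamma\bar{Q}+\Phi)=\mathds{1}^T$. Injectivity on the complement $\text{span}(\mathds{1})^\perp$ I would then settle by a quadratic-form estimate combining the strict negative definiteness of $\bar{Q}$ on this subspace with a structural analysis of the non-symmetric term $-L_cQ$. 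Once invertibility is in hand, the remainder is algebraic bookkeeping and the three displayed formulas (\ref{hatxp})–(\ref{hatx}) follow from the substitution chain above.
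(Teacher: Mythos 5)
Your reduction follows essentially the same route as the paper's own proof: you derive $\hat{x}=\alpha\mathds{1}$ from (\ref{xerr}), obtain the solvability condition $\mathds{1}^TQ^{-1}\hat{x}_p=0$ and the minimum-norm solution $\hat{x}_e=(\gamma_p/\gamma_e)B^\dagger Q^{-1}\hat{x}_p$ from (\ref{xeerr}), eliminate $\alpha$ by premultiplying with $\mathds{1}^T$ and using $\mathds{1}^T(\gamma\bar{Q}+\Phi)=\mathds{1}^T$, and arrive at a linear equation for $\hat{x}_p$ governed by $\gamma\bar{Q}+\Phi$; the paper merely packages the constraint $\mathds{1}^TQ^{-1}\hat{x}_p=0$ as an extra row of the stacked matrix $\tilde{Q}$ in (\ref{Qtilde}) and solves via the normal equations, which is cosmetic. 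The genuine gap is exactly where you flag "the principal obstacle": the nonsingularity of $\gamma\bar{Q}+\Phi$. Your plan --- restrict to $\ker(\mathds{1}^T)$ (correct) and then settle injectivity by a quadratic-form estimate pitting the negative definiteness of $\bar{Q}$ on $\mathrm{span}(\mathds{1})^\perp$ against the term $-L_cQ$ --- does not go through, because the symmetric part of $L_cQ$ restricted to $\mathrm{span}(\mathds{1})^\perp$ is in general indefinite. Concretely, take $L_c$ the path Laplacian on three nodes and $Q=\text{diag}(100,1,1)$: for $v=(1,2,-3)^T\in\mathrm{span}(\mathds{1})^\perp$ one computes $v^TL_cQv=-73<0$, while for $v=(1,-2,1)^T$ one gets $v^TL_cQv=315>0$. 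Hence for small $\gamma>0$ the quadratic form $v^T(\gamma\bar{Q}-L_cQ)v$ takes both signs on $\mathrm{span}(\mathds{1})^\perp$ (the rank-one term $\tfrac{1}{n}\mathds{1}\mathds{1}^T$ vanishes there), and no definiteness argument can exclude a kernel vector.

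The claimed invertibility is nevertheless true, and the paper establishes it in Lemma \ref{lemmaAcolumns} by a mechanism of a different nature that your sketch would need to reproduce: the matrix $A_n=L_cQ-\gamma\bar{Q}$ has nonpositive off-diagonal entries, strictly positive diagonal entries and zero column sums, so its $(n-1)\times(n-1)$ principal block $A_{n-1}$ is column diagonally dominant; the Gershgorin theorem gives $\det(A_{n-1})\neq0$, the inverse-positivity of such matrices (the result of \cite{fiedler1962matrices} invoked in the appendix) gives $\mathds{1}_{n-1}^TA_{n-1}^{-1}a_{[n]}>0$, and a Schur-complement computation together with the identity (\ref{fullrankmatrices}) then yields that $\gamma\bar{Q}-L_cQ+\tfrac{1}{n}\mathds{1}\mathds{1}^T$ is nonsingular for all $\gamma\geq 0$. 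Without an argument exploiting this sign structure (or an equivalent $M$-matrix/irreducibility argument), your existence proof is incomplete at its central step; the rest of your substitution chain, including the self-consistency check $\mathds{1}^TQ^{-1}\hat{x}_p=0$ via $\mathds{1}^T(\gamma\bar{Q}+\Phi)^{-1}=\mathds{1}^T$ and $\mathds{1}^T\bar{Q}=0$, is correct and matches the paper.
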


\begin{proof}
%
From (\ref{xerr}) and (\ref{xeerr})  we obtain that
\begin{align}
 \hat{x}&= \mathds{1}\hat{x}^* \label{xstar}\\
 0&=\mathds{1}^TQ^{-1}\hat{x}_p, \label{sumQinfxhatp}
 \end{align}
 for some scalar function $\hat{x}^*$. From  (\ref{Bbarxe}) we can see that $\bar{x}_e$ is any solution to
 \begin{align}
\gamma_eB\bar{x}_e=&-\bar{Q}Q\tilde{d},  \label{xbare_sol}
 \end{align}
 which combined this with (\ref{xstar}), (\ref{xeerr}) and (\ref{xperr}) results in
\be
\begin{aligned}
&\left( L_c +\gamma_p^2\frac{\gamma_c}{\gamma_l}Q^{-2}\right)\hat{x}_p =\\
& -\frac{\gamma_p}{\gamma_l}Q^{-1}\mathds{1}\hat{x}^*  + \gamma_p\frac{\gamma_c}{\gamma_l} Q^{-1}\bar{Q}Q\tilde{d}. \label{Lc_temp}
\end{aligned}
\ee
By solving for $\hat{x}^*$ we obtain
  \be
\hat{x}^*  =  -\gamma_c\frac{\mathds{1}^TQ^{-1}}{\mathds{1}^TQ^{-1}\mathds{1}}\left(\gamma_pQ^{-1}\hat{x}_p-\bar{Q}Q\tilde{d} \right).\label{hatxstarsolved}
 \ee
Substituting (\ref{hatxstarsolved}) in (\ref{Lc_temp}) and combining this with (\ref{sumQinfxhatp}) yields
     \begin{align}
  \tilde{Q}Q^{-1}\hat{x}_p=& \left(
    \begin{array}{c}
         \gamma_p\frac{\gamma_c}{\gamma_l}\bar{Q} \\
         0 \\
    \end{array}
  \right)\bar{Q}Q\tilde{d}.\label{xhatpnotsolved}
  \end{align}
  where $\bar{Q}$ is as in (\ref{Qbar}) and $\tilde{Q}$ is defined as
  \be
   \tilde{Q} := \left(
    \begin{array}{c}
      \gamma \bar{Q}-L_cQ \\
      \mathds{1}^T \\
    \end{array}
  \right).\label{Qtilde}
   \ee
Due to Lemma \ref{lemmaAcolumns} in Appendix \ref{appendix} we know that $(\tilde{Q}^T\tilde{Q})^{-1}$, $(\gamma \bar{Q}-L_cQ+\mathds{1}\mathds{1}^T)^{-T}$ and $(\gamma \bar{Q}-L_cQ+\frac{1}{n}\mathds{1}\mathds{1}^T)^{-1}$ exists. This implies that the solution of (\ref{xhatpnotsolved}) is given by (\ref{hatxp}), since
\begin{align}
  \hat{x}_p=& \frac{1}{\gamma_p}Q(\tilde{Q}^T\tilde{Q})^{-1}\tilde{Q}^T \left(
                                              \begin{array}{c}
                                                \gamma\bar{Q} \\
                                                0 \\
                                              \end{array}
                                            \right)
   \bar{Q}Q\tilde{d} \nonumber \\
      =&\frac{\gamma}{\gamma_p}Q\left((\gamma \bar{Q}-L_cQ+\mathds{1}\mathds{1}^T)^T(\gamma \bar{Q}-L_cQ+\frac{\mathds{1}\mathds{1}^T}{n})\right)^{-1} \nonumber \\
   &(\gamma \bar{Q}-L_cQ+\mathds{1}\mathds{1}^T)^T \bar{Q}^2Q\tilde{d} \nonumber \\
   =&\frac{\gamma}{\gamma_p}Q\left(\gamma \bar{Q}-L_cQ+\frac{1}{n}\mathds{1}\mathds{1}^T\right)^{-1}\bar{Q}^2 Q\tilde{d}, \label{xphatsol}
\end{align}

where we used the identities $\mathds{1}^T\bar{Q}=0$ and (\ref{fullrankmatrices}) in Appendix \ref{appendix}. By combining (\ref{xstar}), (\ref{hatxstarsolved}) and (\ref{xphatsol}) we immediately observe that (\ref{hatx}) is satisfied.

To find $\hat{x}_e$ we use (\ref{xeerr}) and obtain
\be
 B\hat{x}_e  =   \frac{\gamma_p}{\gamma_e} Q^{-1}\hat{x}_p. \label{Bhatxe}
\ee
To prove that (\ref{Bhatxe}) has a solution we use the identity $\mathds{1}^T(\gamma\bar{Q}+\Phi)=\mathds{1}^T$ and from Lemma \ref{lemmaAcolumns} in Appendix \ref{appendix} we know that $(\gamma\bar{Q}+\Phi)$ is invertible. This implies that $\mathds{1}^T ( \gamma\bar{Q}+\Phi  )^{-1} = \mathds{1}^T$ and therefore we have that $\mathds{1}^T \left(\gamma\bar{Q}+\Phi\right)^{-1}\bar{Q}=0$. Since $\operatorname{Im}(B)=\text{ker}(B^T)^\perp=\text{span}(\mathds{1})^\perp$ we conclude that (\ref{Bhatxe}) has a solution. Moreover, a solution with the minimal Euclidean norm is given by
\be
\hat{x}_e  =  B^\dagger \frac{\gamma_p}{\gamma_e} Q^{-1}\hat{x}_p,  \label{infsolforhatxe}
\ee
and due to (\ref{xphatsol}) it is easy to see that (\ref{infsolforhatxe}) coincides with (\ref{hatxe}). Lastly it can be checked that (\ref{hatxp})-(\ref{hatx}) satisfies (\ref{hatsss}) identically, which concludes this proof.
%
%
%
\end{proof}


\begin{lemm}
The incremental states as in (\ref{coordinatechange}), where $x$, $x_e$ and $x_p$ are the solution to (\ref{model}) in closed loop with  (\ref{contesat}) and (\ref{Upsatcontroller}), and $\bar{x}_e$ as any solution to (\ref{Bbarxe}), $\bar{x}_p$ as in (\ref{barx_def}) and $\hat{x}_p$, $\hat{x}_e$, $\hat{x}$ as defined as in (\ref{hatsss}), satisfy
\begin{equation}
\begin{aligned}
\dot{\tilde{x}}=& B \emph{sat}_e(\tilde{x},\tilde{x}_e)+\gamma_p Q^{-1}\emph{sat}_p(\tilde{x}_p)\\
\dot{\tilde{x}}_p=&-\gamma_lL_c\emph{sat}_p(\tilde{x}_p) +\gamma_p\gamma_cQ^{-1}B\emph{sat}_e(\tilde{x},\tilde{x}_e)\\
&-\gamma_pQ^{-1}\tilde{x}\\
\dot{\tilde{x}}_e=&\gamma_e B^T\tilde{x},
\end{aligned}\label{sat_dynamics}
\end{equation}
where
 \begin{align}
\emph{sat}_e(\tilde{x},\tilde{x}_e):=& \medspace \emph{sat}(-\gamma_cB^T\tilde{x}-\gamma_e\tilde{x}_e;x_p^-, x_p^+)\label{sate} \\
\emph{sat}_p(\tilde{x}_p):=& \medspace \emph{sat}({\tilde{x}}_p;x_p^-,x_p^+),\label{satp}
\end{align}
with
 \begin{align}
x_e^-&=\gamma_e(\bar{x}_e+\hat{x}_e) \label{xemindef} \\
x_e^+&=\gamma_e(\bar{x}_e+\hat{x}_e)+u_e^+ \label{xemaxdef}\\
x_p^-&=\frac{1}{\gamma_p}(Qu_p^-+r)-(\bar{x}_p+\hat{x}_p) \label{xpmindef}\\
x_p^+&=\frac{1}{\gamma_p}(Qu_p^++r)-(\bar{x}_p+\hat{x}_p). \label{xpmaxdef}
\end{align}
\label{lemma_closedloop}
\end{lemm}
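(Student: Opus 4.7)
The plan is to substitute the change of coordinates (\ref{coordinatechange}) into the closed-loop equations and show that all steady-state and offset terms collapse because of the defining relations (\ref{Bbarxe}), (\ref{barx_def}), and (\ref{hatsss}). The only non-routine manipulation is handling the two saturations; everything else is bookkeeping.

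First I would establish two auxiliary identities. The \emph{shift identity}: for any scalar $c$ and $a\le b$, $\text{sat}(w+c;a,b)=\text{sat}(w;a-c,b-c)+c$, which follows by checking each of the three branches. The \emph{pull-through identity} for the node controller: since $Q=\operatorname{diag}(q_i)$ with $q_i>0$ and $\gamma_p>0$, the map $x_p\mapsto Q^{-1}(\gamma_p x_p-r)$ is monotone increasing componentwise, hence
\[
\text{sat}(Q^{-1}(\gamma_p x_p-r);u_p^-,u_p^+)=Q^{-1}\bigl(\gamma_p\,\text{sat}(x_p;\tfrac{1}{\gamma_p}(Qu_p^-+r),\tfrac{1}{\gamma_p}(Qu_p^++r))-r\bigr).
\]
This lets us treat the saturation appearing inside the $u_p$ expression and the one inside the $L_c$ term in (\ref{dotxpsat}) as the \emph{same} saturated quantity.

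Next I would apply the shift identity with $c=-\gamma_e(\bar x_e+\hat x_e)$ to the edge controller, using $B^T\hat x=0$ (from (\ref{xerr})) to cancel the $\hat x$ contribution inside $y$. This yields $u_e=\text{sat}_e(\tilde x,\tilde x_e)-\gamma_e(\bar x_e+\hat x_e)$ with the bounds $x_e^{\pm}$ as in (\ref{xemindef})--(\ref{xemaxdef}). For the node controller I apply the pull-through identity together with the shift identity with $c=\bar x_p+\hat x_p$, which produces $\text{sat}_p(\tilde x_p)+\bar x_p+\hat x_p$ wherever $\text{sat}(x_p;\cdot,\cdot)$ appeared, with the bounds $x_p^{\pm}$ as in (\ref{xpmindef})--(\ref{xpmaxdef}); and $u_p=\gamma_p Q^{-1}\text{sat}_p(\tilde x_p)+Q^{-1}(\gamma_p(\bar x_p+\hat x_p)-r)$.

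Substituting these expressions into $\dot{\tilde x}=Bu_e+u_p+d-\bar x_s$ separates the right-hand side into the desired terms $B\,\text{sat}_e(\tilde x,\tilde x_e)+\gamma_p Q^{-1}\text{sat}_p(\tilde x_p)$ plus a constant offset. Using (\ref{Bbarxe}) for $-\gamma_e B\bar x_e$ and (\ref{barx_def}) for $\gamma_p Q^{-1}\bar x_p$, the $\bar{(\cdot)}$ pieces combine with $\tilde d=d-\bar x_s-Q^{-1}r$ to give exactly zero, while the $\hat{(\cdot)}$ pieces $-\gamma_e B\hat x_e+\gamma_p Q^{-1}\hat x_p$ vanish by (\ref{xeerr}). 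For $\dot{\tilde x}_e=\gamma_e B^Ty=\gamma_e B^T(\tilde x+\hat x)$ the offset is killed immediately by (\ref{xerr}). Finally, for $\dot{\tilde x}_p$, I use $L_c\bar x_p=0$ (since $\bar x_p\in\operatorname{span}(\mathds{1})$ and $L_c\mathds{1}=0$) to eliminate the $\bar x_p$ term, so that after substituting $u_e$ the leftover constant is precisely $-\gamma_l L_c\hat x_p-\gamma_p Q^{-1}\hat x-\gamma_p\gamma_c\gamma_e Q^{-1}B(\bar x_e+\hat x_e)$, which is exactly the right-hand side of (\ref{xperr}) and hence zero.

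The main obstacle is making sure the two saturations are treated with the \emph{same} shift on $x_p$ (requiring the pull-through identity and the fact that $Q^{-1}$ is diagonal positive), so that the cancellations in $\dot{\tilde x}_p$ work out using only the single auxiliary relation (\ref{xperr}); the rest is linear algebra guided by the definitions of $\bar x_e,\bar x_p,\hat x_e,\hat x_p,\hat x$.
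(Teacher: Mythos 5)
Your proposal is correct and follows essentially the same route as the paper's proof: the paper likewise writes out the closed loop explicitly and invokes exactly the two identities you isolate, $\text{sat}(A^{-1}x;x^-,x^+)=A^{-1}\text{sat}(x;Ax^-,Ax^+)$ for diagonal positive $A$ and the constant-shift identity, before cancelling the offsets via (\ref{barx_def}), (\ref{Bbarxe}) and (\ref{hatsss}). Your write-up is merely more explicit about which defining relation kills which leftover constant (and correctly reads the bounds in (\ref{sate}) as $x_e^-,x_e^+$), so no further comment is needed.
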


\begin{proof}
We first write system (\ref{model}) in closed loop with (\ref{contesat}) and (\ref{Upsatcontroller}) and obtain
\begin{equation}
\begin{aligned}
\dot{{x}}=&\gamma_pQ^{-1}\text{sat}({x}_p; \frac{1}{\gamma_p}(Qu_p^-+r), \frac{1}{\gamma_p}(Qu_p^++r)) \\
&+\medspace B \text{sat}(-\gamma_cB^T(x-\bar{x})-\gamma_ex_e;0,u_e^+)+\bar{d}\\
&-Q^{-1}r\\
\dot{x}_p=&-\gamma_lL_c\text{sat}({x}_p;  \frac{1}{\gamma_p}(Q u_p^- +r)  , \frac{1}{\gamma_p}(Qu_p^+  +r))\\
& -\gamma_pQ^{-1}(x-\bar{x})+ \gamma_p\gamma_cQ^{-1}B\cdot\\
&\quad \quad \text{sat}(-\gamma_cB^T(x-\bar{x})-\gamma_ex_e;0,u_e^+) \\
\dot{x}_e=&\medspace \gamma_eB^T(x- \bar{x} ),
\end{aligned}\label{satfull}
\end{equation}
where we used the identities  $\text{sat}(A^{-1}x;x^-,x^+)=A^{-1}\text{sat}(x;Ax^-,Ax^+)$   and $\text{sat}(x+a;x^-,x^+)=\text{sat}(x;x^--a,Ax^+-a)+a$. Using (\ref{barx_def}), (\ref{Bbarxe}), (\ref{coordinatechange}) and (\ref{hatsss}) we can see that (\ref{satfull}) gives the desired result.
\end{proof}

Suppose that the steady states are unsaturated and $y=0$, then (\ref{contesatb}) and (\ref{upsat}) at steady state read as
\begin{align}
  \bar{u}_p =& \gamma_pQ^{-1}\bar{x}_p-r  \label{updef}\\
  \bar{u}_e =& -\gamma_e\bar{x}_e.  \label{upedef}
\end{align}
This implies, in view of (\ref{barx_def}) and (\ref{Bbarxe}) that
\begin{align}
  \bar{u}_p =& -\frac{Q^{-1}\mathds{1}\mathds{1}^T}{\mathds{1}^TQ^{-1}\mathds{1}}\left(d-\bar{x}_s-Q^{-1}r\right)-r \\
  B\bar{u}_e =& \bar{Q}Q\left(d-\bar{x}_s-Q^{-1}r\right),
\end{align}
where $\bar{Q}$ is as defined in (\ref{Qbar}). We note that (\ref{updef}) is, in light of (\ref{qpopt}), the desired steady state input. Furthermore it is important to note that $\bar{u}_p$ and $\bar{u}_e$ are independent of any gain parameters.

A sufficient condition to guarantee that the desired steady state exists is that the steady state inputs $\bar{u}_p$ and $\bar{u}_e$ are unsaturated. Furthermore, we will show that a sufficient condition to guarantee that this steady state is attractive is that the steady state inputs are strictly unsaturated. For these reasons we introduce the following definition.
\begin{definition}
  {\it(Feasibility condition)}.
  Given $d$, $r$ and $Q$, let $\bar{x}_p$ be as in (\ref{barx_def}) and let $\bar{x}_e$ be any solution of (\ref{Bbarxe}). We say that $u_e^+$, $u_p^-$ and $u_p^+$ satisfy the feasibility condition if there exist $\bar{u}_p$ and $\bar{u}_e$, as in (\ref{updef}) and (\ref{upedef}), such that
     \begin{align}
    u_p^-&<  \bar{u}_p < u_p^+, \label{ineqmachtingcondition}\\
    0&<  \bar{u}_e < u_e^+ \label{u_e^+min}.
    \end{align}
\label{matchingcondition}
\end{definition}

Before we state Theorem \ref{maintheorem} we define
\begin{align}
  \hat{u}_p :=& \gamma_pQ^{-1}\hat{x}_p  \label{uphat}\\
  \hat{u}_e :=& -\gamma_e\hat{x}_e,  \label{upehat}
\end{align}
with $\hat{x}_p$ and $\hat{x}_e$ as in (\ref{hatxp}) and (\ref{hatxe}), respectively. We refer to $\hat{u}_p$ and $\hat{u}_e$ as the steady state input errors. We are now ready to state the main result of this paper.

\begin{theo}
Let $u_e^+$, $u_p^-$ and $u_p^+$ satisfy the feasibility condition for a given $d$, $r$ and $Q$. Then Problem \ref{problem2} is solved by controllers (\ref{contesat})-(\ref{Upsatcontroller}) with a suitable choice of $\gamma_c$, $\gamma_p$ and $\gamma_l$ if:
\begin{enumerate}
\item there exists at least one pair $q_i,q_j$ such that $q_i\neq q_j$,
\item the directed graph $\mathcal{G}'$ is strongly connected,
\end{enumerate}
\label{maintheorem}
\end{theo}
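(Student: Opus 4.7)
My plan is to adapt the Lyapunov/LaSalle argument used in Theorem~\ref{theorem1} to the incremental saturated system (\ref{sat_dynamics}) produced by Lemma~\ref{lemma_closedloop}. The first step is to establish that $\tilde{x}=\tilde{x}_p=\tilde{x}_e=0$ is a valid equilibrium, which requires both saturation inputs to be strictly interior, i.e., $0\in(x_e^-,x_e^+)$ and $0\in(x_p^-,x_p^+)$. Using (\ref{xemindef})--(\ref{xpmaxdef}) together with (\ref{uphat})--(\ref{upehat}), this reduces to $0<\bar{u}_e+\hat{u}_e<u_e^+$ and $u_p^-<\bar{u}_p+\hat{u}_p<u_p^+$. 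The feasibility condition of Definition~\ref{matchingcondition} yields the strict inequalities for the nominal $\bar{u}_p,\bar{u}_e$, and the expressions (\ref{hatxp})--(\ref{hatx}) show that $\hat{u}_p$ and $\hat{u}_e$ are both proportional to $\gamma:=\gamma_p^2\gamma_c/\gamma_l$ and vanish as $\gamma\to 0$ (since $(\gamma\bar{Q}+\Phi)^{-1}\to\Phi^{-1}$ by Lemma~\ref{lemma_bound_gamma}). Taking $\gamma$ small enough therefore preserves the feasibility margin and yields a well-defined equilibrium.

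The second step is a Lyapunov analysis. I would work with a positive definite quadratic of the form $V=\tfrac{1}{2}\|\tilde{x}\|^2+\tfrac{1}{2\gamma_c}\|\tilde{x}_p\|^2+\tfrac{1}{2}\|\tilde{x}_e\|^2$, differentiate along (\ref{sat_dynamics}), and invoke the sector property $v^T\text{sat}(v;a,b)\geq\|\text{sat}(v;a,b)\|^2$ valid whenever $a\leq 0\leq b$ (guaranteed by step one). The compensating term $\gamma_p\gamma_c Q^{-1}Bu_e$ that was deliberately added to (\ref{dotxpsat}) is precisely what cancels the cross coupling between $\tilde{x}_p$ and $\text{sat}_e(\tilde{x},\tilde{x}_e)$ in $\dot V$, so that combining the remaining terms should yield a bound of the form $\dot V\leq -\gamma_c\|B^T\tilde{x}\|^2-\gamma_l\|B_c^T\text{sat}_p(\tilde{x}_p)\|^2\leq 0$. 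Radial unboundedness of $V$ then guarantees global boundedness of the trajectories of (\ref{sat_dynamics}).

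The third step is LaSalle: trajectories approach the largest invariant set contained in $\mathcal{S}=\{B^T\tilde{x}=0,\;B_c^T\text{sat}_p(\tilde{x}_p)=0\}$. Strong connectivity of the directed graph $\mathcal{G}'$ implies that the underlying undirected $\mathcal{G}$ is connected, so $\ker B^T=\text{span}(\mathds{1})$; combined with the connectivity of the communication graph this yields $\tilde{x}=\mathds{1}\tilde{x}^\ast$ and $\text{sat}_p(\tilde{x}_p)=\mathds{1}\mu$ on $\mathcal{S}$. Repeating the manipulations of Theorem~\ref{theorem1} and invoking the hypothesis that some pair $q_i\neq q_j$ exists, these relations force $\tilde{x}=0$, $\tilde{x}_p=0$, and $B\tilde{x}_e=0$. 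Reverting to the original coordinates via (\ref{coordinatechange}) and (\ref{upsat}), the state and input converge to $\bar{x}+\hat{x}$ and $\bar{u}_p+\hat{u}_p$, respectively. Since $\|\hat{x}\|$ and $\|\hat{u}_p\|$ are $O(\gamma)$, choosing $\gamma$ sufficiently small delivers the practical bounds (\ref{xgoal2})--(\ref{u_p_goal2}), while the hard constraints (\ref{saturationbounds}) hold for all $t\geq 0$ by construction of (\ref{contesatb}) and (\ref{upsat}).

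The principal obstacle I anticipate is step two: verifying that the sector inequality absorbs \emph{every} indefinite cross term in $\dot V$, even when different components of the controller operate in different saturation modes. In particular, the compensating term $\gamma_p\gamma_c Q^{-1}Bu_e$ in (\ref{dotxpsat}) must pair exactly with the sector bound to eliminate the indefinite contribution $\tilde{x}_p^T Q^{-1}B\,\text{sat}_e(\tilde{x},\tilde{x}_e)$. Carrying out this algebra carefully, and checking that the resulting sign-semidefiniteness of $\dot V$ is global rather than only valid in a neighborhood of the origin, is where the proof will require the most effort and where constraints on the admissible ratio $\gamma_p^2\gamma_c/\gamma_l$ are likely to originate.
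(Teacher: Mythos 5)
Your skeleton (incremental coordinates, Lyapunov plus LaSalle, shrink the gains to get practical convergence) matches the paper's, but two of your steps contain genuine gaps.

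First, the quadratic Lyapunov function in your step two does not work, and the sector inequality does not rescue it. With $V=\tfrac{1}{2}\|\tilde{x}\|^2+\tfrac{1}{2\gamma_c}\|\tilde{x}_p\|^2+\tfrac{1}{2}\|\tilde{x}_e\|^2$, differentiating along (\ref{sat_dynamics}) produces the pair of cross terms $\tilde{x}^TB\,\text{sat}_e(\tilde{x},\tilde{x}_e)$ and $\gamma_e\tilde{x}_e^TB^T\tilde{x}$, and the pair $\gamma_p\tilde{x}^TQ^{-1}\text{sat}_p(\tilde{x}_p)$ and $-\tfrac{\gamma_p}{\gamma_c}\tilde{x}_p^TQ^{-1}\tilde{x}$; these cancel only when the saturations are inactive, and when they are active the residue (e.g. $\gamma_p\tilde{x}^TQ^{-1}(\text{sat}_p(\tilde{x}_p)-\tilde{x}_p)$) is sign-indefinite and not absorbed by $v^T\text{sat}(v)\geq\|\text{sat}(v)\|^2$. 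The paper instead uses the Lyapunov function (\ref{lyapunovfunction}) built from \emph{integrals of the saturation functions}, with $S^e_i$ depending on the combined variable $\chi=\gamma_e\tilde{x}_e+\gamma_cB^T\tilde{x}$, so that $\partial V/\partial\tilde{x}_p=\text{sat}_p(\tilde{x}_p)^T$ and $\partial V/\partial\tilde{x}_e=-\gamma_e^{-1}\text{sat}_e(\tilde{x},\tilde{x}_e)^T$ and the cancellations are exact globally. A consequence you miss is that the dissipation is $\dot V=-\gamma_c\|B\,\text{sat}_e(\tilde{x},\tilde{x}_e)\|^2-\gamma_l\|B_c^T\text{sat}_p(\tilde{x}_p)\|^2$, \emph{not} $-\gamma_c\|B^T\tilde{x}\|^2-\dots$. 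Hence on the invariant set you do not get $B^T\tilde{x}=0$, and "repeating the manipulations of Theorem~\ref{theorem1}" is not available: the residual dynamics on $\mathcal{S}$ are $\ddot{\tilde{x}}_p=-\gamma_p^2Q^{-2}\text{sat}_p(\tilde{x}_p)$, a saturated oscillator, and the paper needs a separate case analysis (finite-time exit from saturation, a time interval on which all components are unsaturated, then consensus plus $q_i\neq q_j$) to conclude $\tilde{x}_p\equiv0$ and only afterwards $\tilde{x}=0$. Your proposal has no replacement for this argument.

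Second, your tuning claim is too coarse: from (\ref{hatx}), $\hat{x}$ contains the term $\gamma_c\frac{\mathds{1}\mathds{1}^TQ^{-1}}{\mathds{1}^TQ^{-1}\mathds{1}}\bar{Q}Q\tilde{d}$, which is $O(\gamma_c)$ and does \emph{not} vanish as $\gamma=\gamma_p^2\gamma_c/\gamma_l\to0$ (e.g. by shrinking $\gamma_p^2/\gamma_l$ at fixed $\gamma_c$). So "$\|\hat{x}\|$ is $O(\gamma)$" is false, and the single-parameter limit $\gamma\to0$ cannot deliver $\|\hat{x}\|<\epsilon_1$. The paper needs the two separate conditions (\ref{boundgammac}) on $\gamma_c$ and (\ref{boundgamma}) on $\gamma_p^2/\gamma_l$, and these same conditions (via Lemmas~\ref{lemma3} and~\ref{bounds_not_sat}) are what guarantee your step-one requirement that the origin lies strictly inside the shifted saturation intervals. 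Your first and third steps are otherwise in the right spirit.
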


\begin{proof}
In order to prove Theorem \ref{maintheorem} we will show that $\lim_{t\rightarrow\infty} \tilde{x}=0$ and $\lim_{t\rightarrow\infty} \tilde{x}_p=0$ and argue that this implies that Problem \ref{problem2} is solved. Let $V$ be as in Lemma \ref{lemma1} in Appendix \ref{appendix}. Using this same Lemma we know that we can invoke LaSalle's invariance principle to show that $(\tilde{x},\tilde{x}_e,\tilde{x}_p)$ converges to the largest invariant set where $\dot{V}=0$, which is given by
\be
\begin{aligned}
\mathcal{S}:=&\left\{(\tilde{x},\tilde{x}_e,\tilde{x}_p)|B\text{sat}_e(\tilde{x},\tilde{x}_e)=0,\right. \\
&\left.B_c^T\text{sat}_p(\tilde{x}_p)=0 \right\},
\end{aligned}\label{invariantset2}
\ee
with $\text{sat}_e(\tilde{x},\tilde{x}_e)$ as in (\ref{sate}) and $\text{sat}_p(\tilde{x}_p)$ as in (\ref{satp}). In light of (\ref{sat_dynamics}), we can see that the dynamics on this invariant set $\mathcal{S}$ are given by
\begin{subequations}\label{lasallesat_equations}
\begin{align}
\dot{\tilde{x}}&=\gamma_pQ^{-1}\text{sat}_p (\tilde{x}_p) \label{lasallesat1}\\
\dot{\tilde{x}}_p&=-\gamma_pQ^{-1}\tilde{x}\label{lasallesat2}\\
\dot{\tilde{x}}_e&=B^T\tilde{x}.\label{lasallesat3}
\end{align}
\end{subequations}
First we will prove that on this invariant set $\mathcal{S}$, necessarily $\tilde{x}_p=0$.

Let ${x}_p^-$ and ${x}_p^+$ be as in (\ref{xpmindef}) and (\ref{xpmaxdef}), respectively, then by Lemma \ref{bounds_not_sat} in Appendix \ref{appendix} we know that ${x}_p^-<0$ and ${x}_p^+>0$. Now assume by contradiction that there exists a $(\tilde{x}_p)_i$, which is not identically equal to zero. Now consider two cases, either $(\tilde{x}_p)_j=0$ for all $j\neq i$ or there exists at least one other $(\tilde{x}_p)_j$, with $i\neq j$, which is not identically equal to zero.
In the first case we have that
\begin{equation}
\text{sat}((\tilde{x}_p)_j;({x}_p^-)_j,({x}_p^+)_j)=0, \label{sat0}
\end{equation}
for each $j\neq i$, since $({x}_p^-)_j<0$, $({x}_p^+)_j>0$. Furthermore, since $B_c$ is the incidence matrix of a (strongly) connected graph, it holds that $B_c^T\text{sat}_p(\tilde{x}_p)=0$, which implies that
\be
\begin{aligned}
&\text{sat}(({\tilde{x}}_p)_i;({x}_p^-)_i,({x}_p^+)_i)\\
=&\text{sat}(({\tilde{x}}_p)_j;({x}_p^-)_j,({x}_p^+)_j),
\end{aligned} \label{sats_are_equal}
\ee
for each $i$ and $j$. From (\ref{sat0}) and (\ref{sats_are_equal}) we can now conclude that $({\tilde{x}}_p)_i=0$ since also $({x}_p^-)_i<0$, $({x}_p^+)_i)>0$. Therefore we have a contradiction and necessarily $(\tilde{x}_p)_i=0$.

%
%

Now consider the second case, where we assume that there exists at least another $(\tilde{x}_p)_j$, with $i\neq j$, which is not identically equal to zero. By (\ref{lasallesat1}) and (\ref{lasallesat2}) we obtain that $\ddot{\tilde{x}}_p=-\gamma_p^2Q^{-2}\text{sat}_p(\tilde{x}_p)$, which implies that for each element $i$ we have that
\be
(\ddot{\tilde{x}}_p)_i=\begin{cases}
-\gamma_p^2q_i^{-2}({x}_p^-)_i &\text{if } (\tilde{x}_p)_i\leq({x}_p^-)_i\\
-\gamma_p^2q_i^{-2}({x}_p^+)_i &\text{if }({x}_p^+)_i\leq(\tilde{x}_p)_i\\
-\gamma_p^2q_i^{-2}(\tilde{x}_p)_i &\text{otherwise}.
\end{cases}\label{dynamicsxptilde}
\ee
Let $p^-:=\max_i ({x}_p^-)_i $ and $p^+:=\min_i ({x}_p^+)_i$. Now we see that the solution $(\tilde{x}_p)_i$ to (\ref{dynamicsxptilde}) consists of parts that are periodic when the saturation is inactive, and are parabolic when the saturation is active. The intervals in which $(\tilde{x}_p)_i$ has a parabolic behaviour have a finite length, since $(\ddot{\tilde{x}}_p)_i<0$ if $(\tilde{x}_p)_i>0$ and $(\ddot{\tilde{x}}_p)_i>0$ if $(\tilde{x}_p)_i<0$, ensuring that it enters the unsaturated range. Furthermore, it is easy to see that in the unsaturated range the periodic behaviour forces $(\tilde{x}_p)_i$ to cross the origin in finite time. For this reason there exists an interval $(T_1,T_2)$ such that
\be
p^-\leq(\tilde{x}_p)_i\leq p^+,
\ee
on which, by definition of $p^-$ and $p^+$, all the saturations are inactive. This, together with (\ref{invariantset2}) implies that $\tilde{x}_p=\mathds{1}\alpha(t)$, where $\alpha(t)\in\mathbb{R}$. Due to (\ref{lasallesat1}) and (\ref{lasallesat2}), we have that
\be
\mathds{1}\ddot{\alpha}(t)=-Q^{-2}\mathds{1}{\alpha}(t),
\ee
which implies that
\be
-q_i^{-2}{\alpha}(t)=-q_j^{-2}{\alpha}(t),
\ee
for some $i,j$. Now, by assumption we know that there exists an $i$ and a $j$ such that $q_i\neq q_j$, which implies that $\alpha(t)=0$. It follows that $\tilde{x}_p(t)=0$ for $t\in(T_1,T_2)$. Also note that $(\tilde{x}_p)_i$ enters the interval $(T_1,T_2)$ in finite time and by (\ref{dynamicsxptilde}) we see that $(\tilde{x}_p)_i$ is locally Lipschitz continuous, hence cannot undergo jumps. This implies that $\tilde{x}_p(t)=0$ for all $t\geq0$ on the invariant set $\mathcal{S}$ which is a contradiction, implying that at most one $(\tilde{x}_p)_i$ is not identically equal to zero. As this case has already been ruled out, we obtain that $\tilde{x}_p(t)=0$ for all $t\geq0$ on the invariant set $\mathcal{S}$.

It is now trivial to prove that $\tilde{x}=0$ on $\mathcal{S}$. Due to (\ref{lasallesat2}) we can see that
\be
0=-Q^{-1}\tilde{x},
\ee
which implies that $\tilde{x}=0$.

Finally, due to a suitable choice of $\gamma_c$, $\gamma_p$ and $\gamma_l$, Lemma \ref{lemma3} in Appendix \ref{appendix} and (\ref{coordinatechange}), we have that
\be
\begin{aligned}
\lim_{t\rightarrow\infty}\| x-\bar{x}\|=&\lim_{t\rightarrow\infty}\|\tilde{x}+\hat{x}\|=\|\hat{x}\|<\epsilon_1\\
\lim_{t\rightarrow\infty}\| u_p-\bar{u}_p\|=&\lim_{t\rightarrow\infty}\|\tilde{u}_p+\hat{u}_p\|=\|\hat{u}_p\|<\epsilon_2,\\
\end{aligned}
\ee
and therefore the thesis follows.
\end{proof}


\begin{rema}
The choice of $\gamma_c$, $\gamma_p$ and $\gamma_l$ for which Problem \ref{problem2} is solved are explicitly constructed in Lemma \ref{lemma3} in Appendix \ref{appendix}. A discussion on this choice can be found in Remark \ref{remarkgains} in Appendix \ref{appendix}.
\end{rema}
%

\section{Case study}
\label{example}
\begin{figure}
\centering
  \includegraphics[trim=1cm 12cm 17cm 0.5cm, clip=true, width=0.37 \textwidth]{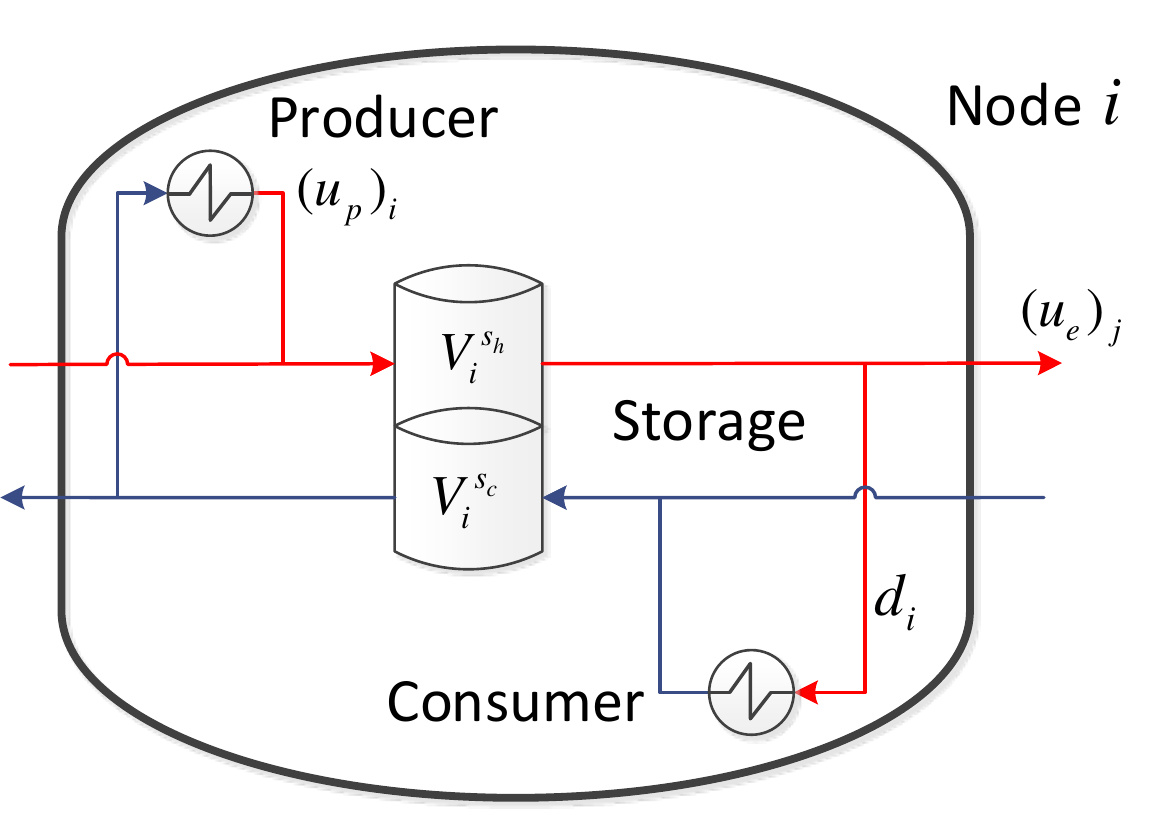}\\
  \caption{A node in the network}\label{node}
\end{figure}
Motivated by our previous work \cite{Scholten2015}, we provide a case study in which we consider a district heating system. The setup is such that each node has a producer, a consumer and a stratified storage tank. This storage tank has a hot and cold layer of water of which the variable volumes are denoted as $V_i^{S_h}$ and $V_i^{S_c}$, respectively and are both given in $m^3$. The topology of a node is given in Figure \ref{node}, and these nodes are connected via a graph $\mathcal{G}$. Using mass conservation laws, we obtain the dynamics for the hot and cold storage layers. These dynamics are given by
\begin{align}
\dot{V}^{S_h}&=B\theta+q^p-q^c \label{dynamicsstoragehot1}\\
\dot{V}^{S_c}&=-B\theta-q^p+q^c,
\end{align}
where $q^p$ and $q^c$ are the flows trough the heat exchanger of the producer and consumer, respectively, and $\theta$ is the flow on a link, which are all given in $m^3/s$. By defining $x={V}^{S_h}$, $u_e=\theta$, $u_p=q^p$ and $d=q^c$, it is easy to see that (\ref{dynamicsstoragehot1}) has the same dynamics as (\ref{model}). Since $\dot{V}^{S_h}+\dot{V}^{S_c}=0$ implies that ${V}^{S_h}(t)+{V}^{S_c}(t)={V}^{S_h}(0)+{V}^{S_c}(0)$ it is trivial to obtain the state of ${V}^{S_c}$, if ${V}^{S_h}$ is given. To this end we perform a simulation where we only consider (\ref{dynamicsstoragehot1}).

\subsection{Simulation}
\begin{figure}
\centering
\includegraphics[trim=3cm 7cm 3cm 7cm, clip=true, width= 0.48 \textwidth]{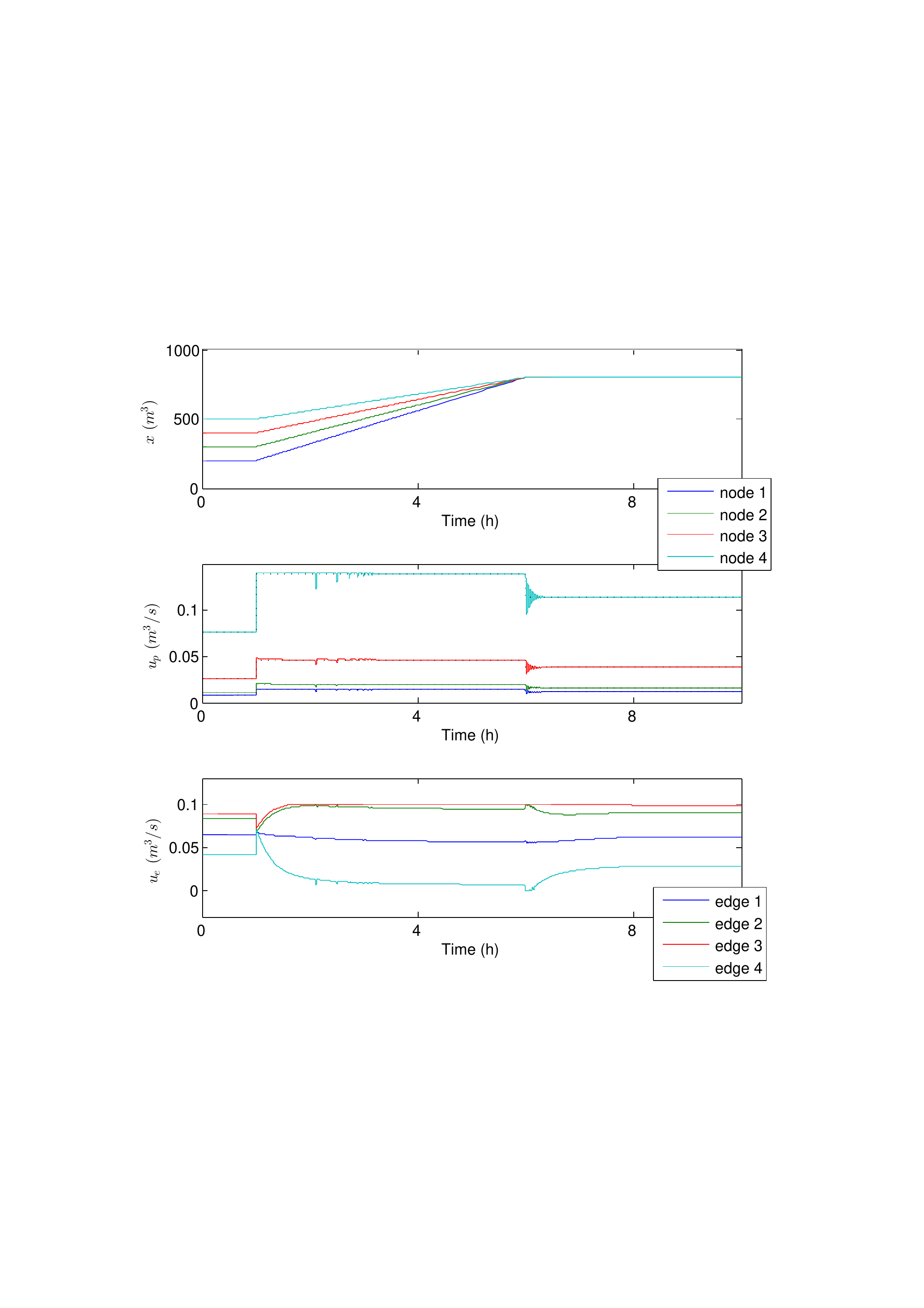}\\
\caption{Volumes, flows and production in the presence of saturation. }\label{fig1}
\end{figure}
\begin{figure}
\centering
\includegraphics[trim=3cm 7.5cm 3cm 7.5cm, clip=true, width= 0.48 \textwidth]{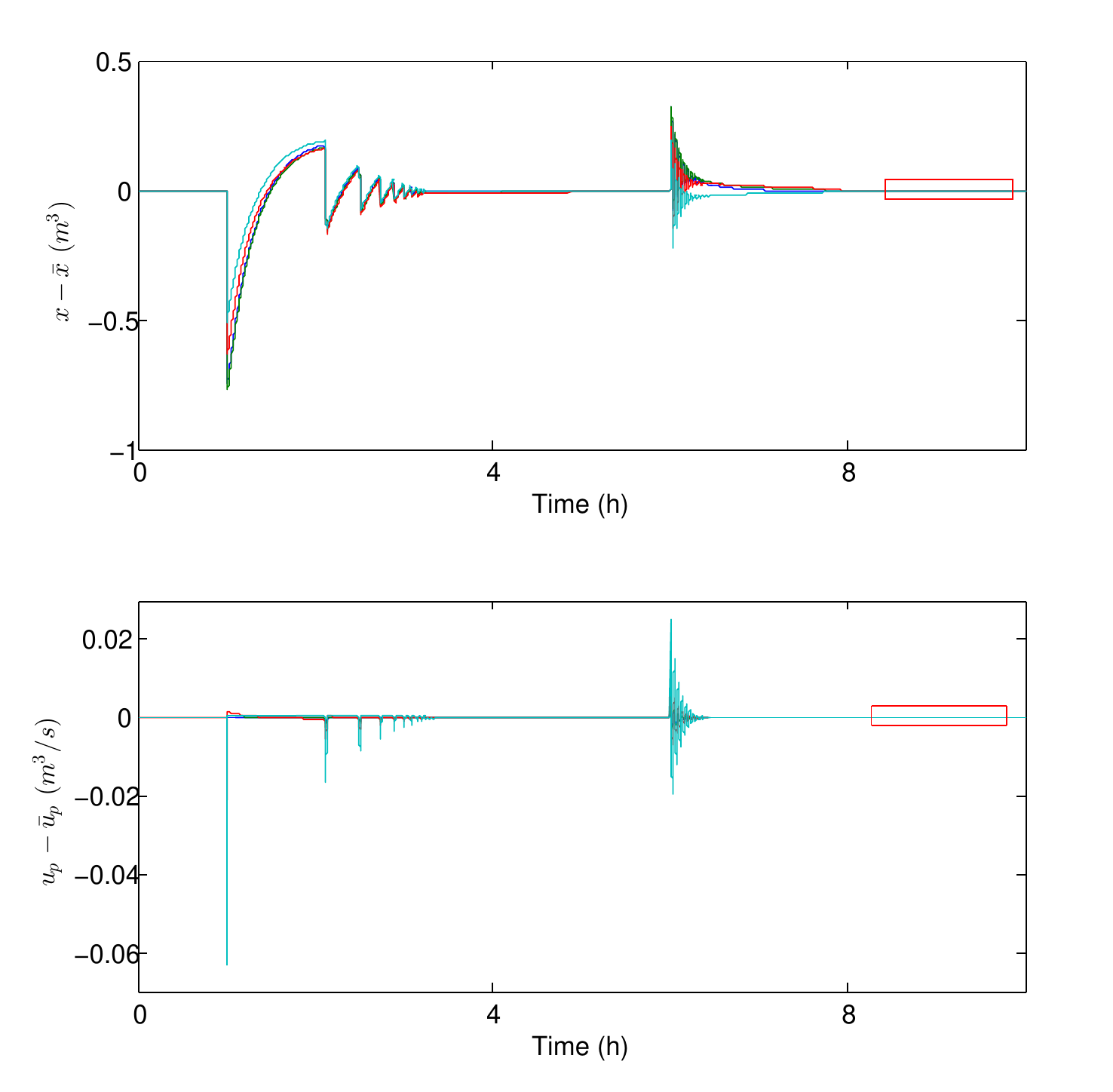}\\
\caption{Deviations from the volume setpoints and optimal production. }\label{figerror}
\end{figure}
\begin{figure}
\centering
\includegraphics[trim=3cm 9.5cm 3cm 9.5cm, clip=true, width= 0.48 \textwidth]{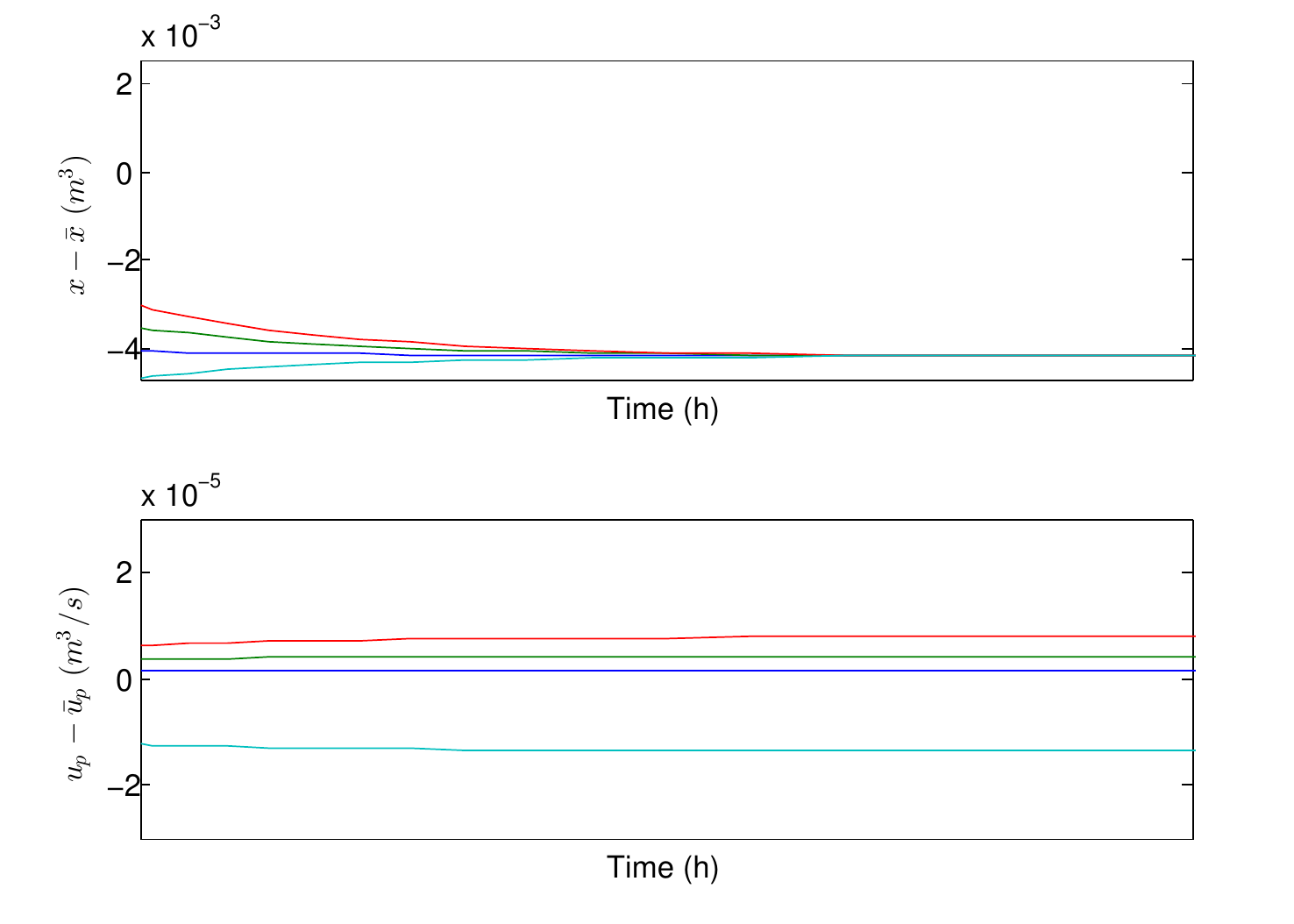}\\
\caption{Enlargement of the highlighted areas of Figure \ref{figerror}. }\label{figerrorlarge}
\end{figure}
We perform a simulation over a $24$ hour time interval and use a circle graph consisting of four nodes. The entries of the quadratic cost functions are given by
\be
Q=\text{diag}\left(
  \begin{array}{cccc}
   1 & 0.7& 0.3& 0.1 \\
  \end{array}
\right),\nonumber
\ee
while $s$ and $r$ are zero vectors. We initialize the system at steady state, with the demand and the volume setpoint given by
\begin{align}
d&=-\left(
  \begin{array}{cccc}
   0.03 &0.03& 0.03& 0.03 \\
  \end{array}
\right)\nonumber \\
\bar{x}&=\left(
  \begin{array}{cccc}
   200 & 300 & 400 & 500\\
  \end{array}
\right).\nonumber
\end{align}
We investigate the response of the system to a ramp reference signal as well as to an increase in demand. First, at $t=1h$ we switch from a constant reference signal to a ramp such that at $t=6h$, $\bar{x}$ becomes
\be
\bar{x}=\left(
  \begin{array}{cccc}
   800 & 800 & 800 & 800\\
  \end{array}
\right).\nonumber
\ee
Soon after this interval we increase the demand by $50\%$ and keep the setpoints constant. The saturation bounds on the production are given by $u_p^-=0m^3/s$ and $u_p^+=0.14m^3/s$ while $u_e^+=0.1m^3/s$ and the error bounds, as defined in (\ref{xgoal2}) and (\ref{u_p_goal2}), are set to $\epsilon_1=10^{-2} $ and $\epsilon_2=10^{-4}$.

Based on Lemma \ref{lemma3} in Appendix \ref{appendix} we can explicitly calculate the bounds on $\gamma_c$, $\gamma_p$ and $\gamma_l$ (see  (\ref{boundgammac}) and (\ref{boundgamma})).
To illustrate how these gains are found, we investigate them for the first interval ({\it i.e.} between $0h$ and $1h$). In that case the numerical value of the right hand side of (\ref{boundgammac}) is $0.1324$  and $\|\Phi^{-1}\bar{Q}^2Q\tilde{d}\|=0.7773$. Furthermore, we have that $\frac{\|\Phi^{-1} \bar{Q}^2 Q \tilde{d}\|}{\|\Phi^{-1}\bar{Q}\| }=   0.0676 $ and by taking $\theta= 0.9985$ this implies that $\delta_\theta=10^{-4}$. Additionally we have that $\delta_p=0.0058 $ and $\delta_e=0.0087$ which means that $\min\left\{\delta_p, \delta_e, \delta_\theta,\epsilon_2 \right\}=\epsilon_2$. It is now easily verified that the conditions in (\ref{boundgammaandgammac}) are satisfied if $\gamma_c< 0.1109$ and $\gamma_p^2/\gamma_l< 1.90\cdot 10^{-4}$. If we therefore take $\gamma_p=0.01$, $\gamma_l=0.53$, $\gamma_e=0.01$ and $\gamma_c=0.11$ the conditions in (\ref{boundgammaandgammac}) are clearly satisfied.



Plots of the resulting simulations can be found in Figure \ref{fig1}, in which we see that in all intervals $\lim_{t\rightarrow\infty} x\approx\bar{x}$ and $\lim_{t\rightarrow\infty} u_p\approx\bar{u}_p$. The optimal production $\bar{u}_p$ in the middle plot is given by the dotted black line from which one can see that the jumps in the reference signal (corresponding to a transition to a charging phase) affects the optimal production levels. In the top plot of Figure \ref{fig1} we can see that $x$ is able to track the piecewise constant reference signal $\bar{x}$. The flow injected by the producers, depicted in the middle plot, show some transient behaviour after the switch to the charging phase and increase of demand. In the interval $1 \leq t \leq 4$, we can also see that the production on node $4$ and the flows on edge $3$ are subject to saturation which cause some wind-up phenomena.

In Figure \ref{figerror} we see in the upper plot the deviation of $x$ from $\bar{x}$ and in the bottom plot the deviation of $u_p$ from $\bar{u}_p$. Again the transient behaviour after $t=1$ and $t=6$ is clearly visible as well as the wind-up phenomena for $1 \leq t \leq 4$. Finally, an enlargement of the highlighted areas in Figure \ref{figerror} can be found in Figure \ref{figerrorlarge}. From this Figure we can clearly see that at the end of the last interval we have that $\|x(t)-\bar{x}(t)\|<\epsilon_1 $ and $\|u_p(t)-\overline{u}_p \|<\epsilon_2$, respectively.

\section{CONCLUSION}
\label{conclusion}
We proposed dynamic feedback controllers that solve a quasi-optimal regulation problem with saturation on the inputs and the flows. The controllers are composed of two parts: the first part regulates the flows on the edges, which results in load balancing while the second part provides an optimal input on the nodes at steady state. We have stated sufficient conditions such that, in spite of the saturations, the controllers are still able to achieve quasi-optimal regulation.

An open problem is to consider general convex cost functions instead of the linear-quadratic cost functions we use. Another interesting problem is to extend this setup to time-varying disturbances as considered in \cite{burger2015dynamic}, or extend it to a tracking problem of more general time varying signals. Lastly, we would like to investigate the existence of alternative controllers, that guarantee asymptotic convergence to the optimal steady state in the constrained case.

\section*{ACKNOWLEDGMENT}
The work of C.~De Persis, P.~Tesi and T.W.~Scholten is supported by the research grant {\it Flexiheat} (Ministerie van Economische Zaken, Landbouw en Innovatie). The work of  C.~De Persis is also supported by  {\it Efficient Distribution of Green Energy} (Danish Research Council of Strategic Research) and {\it QUICK} (The Netherlands Organization of Scientific Research). Furthermore, the authors would like to thank JieQiang Wei for his valuable feedback on the paper. 


\bibliographystyle{plain}        
\bibliography{overallbib9}           

\begin{thebibliography}{10}

\bibitem{bapat2010graphs}
Ravindra~B. Bapat.
\newblock {\em Graphs and matrices}.
\newblock Springer-Verlag London, 2010.

\bibitem{Bauso20132206}
Dario Bauso, Franco Blanchini, Laura Giarr\'{e}, and Raffaele Pesenti.
\newblock The linear saturated decentralized strategy for constrained flow
  control is asymptotically optimal.
\newblock {\em Automatica}, 49(7):2206 -- 2212, 2013.

\bibitem{bertsekas1998network}
Dimitri~P. Bertsekas.
\newblock {\em Network Optimization: continuous and discrete methods},
  volume~8.
\newblock Athena Scientific, Belmont, Massachusetts, 1998.

\bibitem{blanchini2016compartmental}
Franco Blanchini, Elisa Franco, Giulia Giordano, Vahid Mardanlou, and Pier~Luca
  Montessoro.
\newblock Compartmental flow control: Decentralization, robustness and
  optimality.
\newblock {\em Automatica}, 64:18--28, 2016.

\bibitem{boyd2004}
Stephen Boyd and Lieven Vandenberghe.
\newblock {\em Convex optimization}.
\newblock Cambridge University Press, 2004.

\bibitem{burbano2014distributed}
Daniel Burbano and Mario Di~Bernardo.
\newblock Distributed pid control for consensus of homogeneous and
  heterogeneous networks.
\newblock {\em arXiv preprint arXiv:1409.2324}, 2014.

\bibitem{Buerger2015}
Mathias B{\"u}rger and Claudio De~Persis.
\newblock Dynamic coupling design for nonlinear output agreement and
  time-varying flow control.
\newblock {\em Automatica}, 51:210--222, 2015.

\bibitem{burger2015dynamic}
Mathias B{\"u}rger, Claudio De~Persis, and Frank Allg{\"o}wer.
\newblock Dynamic pricing control for constrained distribution networks with
  storage.
\newblock {\em Control of Network Systems, IEEE Transactions on}, 2(1):88--97,
  2015.

\bibitem{burger2013hierarchical}
Mathias B{\"u}rger, Daniel Zelazo, and Frank Allg{\"o}wer.
\newblock Hierarchical clustering of dynamical networks using a saddle-point
  analysis.
\newblock {\em Automatic Control, IEEE Transactions on}, 58(1):113--124, 2013.

\bibitem{burger2014duality}
Mathias B{\"u}rger, Daniel Zelazo, and Frank Allg{\"o}wer.
\newblock Duality and network theory in passivity-based cooperative control.
\newblock {\em Automatica}, 50(8):2051--2061, 2014.

\bibitem{como2013robust}
Giacomo Como, Ketan Savla, Daron Acemoglu, Munther~A Dahleh, and Emilio
  Frazzoli.
\newblock Robust distributed routing in dynamical flow networks-part i: Locally
  responsive policies and weak resilience.
\newblock {\em IEEE Transactions on Automatic Control}, 58(2):317--332, 2013.

\bibitem{danielson2013constrained}
Claus Danielson, Francesco Borrelli, Douglas Oliver, Dyche Anderson, and Tony
  Phillips.
\newblock Constrained flow control in storage networks: Capacity maximization
  and balancing.
\newblock {\em Automatica}, 49(9):2612--2621, 2013.

\bibitem{de2013balancing}
Claudio De~Persis.
\newblock Balancing time-varying demand-supply in distribution networks: an
  internal model approach.
\newblock In {\em Control Conference (ECC), 2013 European}, pages 748--753.
  IEEE, 2013.

\bibitem{DePersis2014}
Claudio De~Persis, Tom~N. Jensen, Romeo Ortega, and Rafal Wisniewski.
\newblock Output regulation of large-scale hydraulic networks.
\newblock {\em Control Systems Technology, IEEE Transactions on},
  22(1):238--245, Jan 2014.

\bibitem{DePersis2011}
Claudio De~Persis and Carsten~S. Kallesoe.
\newblock Pressure regulation in nonlinear hydraulic networks by positive and
  quantized controls.
\newblock {\em Control Systems Technology, IEEE Transactions on},
  19(6):1371--1383, Nov 2011.

\bibitem{fiedler1962matrices}
Miroslav Fiedler and Vlastimil Ptak.
\newblock On matrices with non-positive off-diagonal elements and positive
  principal minors.
\newblock {\em Czechoslovak Mathematical Journal}, 12(3):382--400, 1962.

\bibitem{gharesifard2014distributed}
Bahman Gharesifard and Jorge Cort{\'e}s.
\newblock Distributed continuous-time convex optimization on weight-balanced
  digraphs.
\newblock {\em Automatic Control, IEEE Transactions on}, 59(3):781--786, 2014.

\bibitem{gupta2015distributed}
Santosh~K. Gupta, Koushik Kar, Sandipan Mishra, and John~T Wen.
\newblock Distributed consensus algorithms for collaborative temperature
  control in smart buildings.
\newblock In {\em American Control Conference (ACC), 2015}, pages 5758--5763.
  IEEE, 2015.

\bibitem{jayawardhana2007passivity}
Bayu Jayawardhana, Romeo Ortega, Elo{\'\i}sa Garc{\'\i}a-Canseco, and Fernando
  Castanos.
\newblock Passivity of nonlinear incremental systems: Application to {PI}
  stabilization of nonlinear {RLC} circuits.
\newblock {\em Systems \& control letters}, 56(9):618--622, 2007.

\bibitem{kim2015adaptation}
Hongkeun Kim and Claudio De~Persis.
\newblock Adaptation and disturbance rejection for output synchronization of
  incrementally output-feedback passive systems.
\newblock {\em arXiv preprint arXiv:1509.03840}, 2015.

\bibitem{lovisari2014stability}
Enrico Lovisari, Giacomo Como, and Ketan Savla.
\newblock Stability of monotone dynamical flow networks.
\newblock In {\em Decision and Control (CDC), 2014 IEEE 53rd Annual Conference
  on}, pages 2384--2389. IEEE, 2014.

\bibitem{ortega2008control}
Romeo Ortega, Arjan van~der Schaft, Fernando Castanos, and Alessandro Astolfi.
\newblock Control by interconnection and standard passivity-based control of
  port-{H}amiltonian systems.
\newblock {\em Automatic Control, IEEE Transactions on}, 53(11):2527--2542,
  2008.

\bibitem{rockafellar1984network}
R.~Tyrrell Rockafellar.
\newblock {\em Network flows and monotropic optimization}.
\newblock Wiley-Interscience, 1984 (610 pages), Republished by Athena
  Scientific, Belmont Massachusetts, 1998.

\bibitem{romero2013robust}
Jos{\'e}~Guadalupe Romero, Alejandro Donaire, and Romeo Ortega.
\newblock Robust energy shaping control of mechanical systems.
\newblock {\em Systems \& Control Letters}, 62(9):770--780, 2013.

\bibitem{Scholten2015}
Tjardo~W. Scholten, Claudio De~Persis, and Pietro Tesi.
\newblock Modeling and control of heat networks with storage: the
  single-producer multiple-consumer case.
\newblock In {\em Proc. of the 14th European Control Conference 2015}, pages
  2247--2252, 2015.

\bibitem{trip_2016_automatica}
Sebastian Trip, Mathias B{\"u}rger, and Claudio {De Persis}.
\newblock An internal model approach to (optimal) frequency regulation in power
  grids with time-varying voltages.
\newblock {\em Automatica}, 64:240 -- 253, 2016.

\bibitem{van2014port}
Arjan~J. van~der Schaft and Dimitri Jeltsema.
\newblock Port-hamiltonian systems theory: An introductory overview.
\newblock {\em Foundations and Trends in Systems and Control}, 1(2-3):173--378,
  2014.

\bibitem{van2012hamiltonian}
Arjan~J. van~der Schaft and Jieqiang Wei.
\newblock A {H}amiltonian perspective on the control of dynamical distribution
  networks.
\newblock In {\em 4th IFAC Workshop on Lagrangian and Hamiltonian Methods for
  Non Linear Control}, pages 24--29, 2012.

\bibitem{wei2013load}
Jieqiang Wei and Arjan~J. van~der Schaft.
\newblock Load balancing of dynamical distribution networks with flow
  constraints and unknown in/outflows.
\newblock {\em Systems \& Control Letters}, 62(11):1001--1008, 2013.

\bibitem{wei2014constrained}
Jieqiang Wei and Arjan~J. van~der Schaft.
\newblock Constrained proportional integral control of dynamical distribution
  networks with state constraints.
\newblock In {\em Decision and Control (CDC), 2014 IEEE 53rd Annual Conference
  on}, pages 6056--6061. IEEE, 2014.

\bibitem{zhao2015distributed}
Jinxin Zhao and Florian D{\"o}rfler.
\newblock Distributed control and optimization in dc microgrids.
\newblock {\em Automatica}, 61:18--26, 2015.

\end{thebibliography}



\appendix
\section{LEMMAS}
\label{appendix}

In order to prove Theorem \ref{maintheorem} we introduce the following lemmas.
\begin{lemm}
\label{lemmaAcolumns}
Let $Q$ be a diagonal matrix with positive entries, let $\tilde{Q}$ and $\bar{Q}$ be as in (\ref{Qtilde}) and (\ref{Qbar}), respectively and let $L_c$ be an undirected strongly connected Laplacian matrix, then
$(\gamma \bar{Q}-L_cQ+\mathds{1}\mathds{1}^T)$, $(\gamma \bar{Q}-L_cQ+\frac{1}{n}\mathds{1}\mathds{1}^T)$ and $\tilde{Q}^T\tilde{Q}$ are full rank for all $\gamma\in\mathbb{R}_{\geq0}$.
\end{lemm}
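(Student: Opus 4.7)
The plan is to reduce all three invertibility claims to a single core fact: writing $A := \gamma\bar Q - L_c Q$, the subspaces $\ker(A)$ and $\mathds{1}^{\perp}$ intersect trivially. This unifies the three statements because $\tilde Q$ is precisely the stack of $A$ and $\mathds{1}^T$, and adding any nonzero multiple of $\mathds{1}\mathds{1}^T$ to $A$ will be shown to be equivalent, on $\ker(\cdot)$, to requiring both $Av=0$ and $\mathds{1}^T v=0$.

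First I would record two structural identities: $\mathds{1}^T \bar Q = 0$ (direct computation from \eqref{Qbar}) and $\mathds{1}^T L_c = 0$ (Laplacian of an undirected graph). Together these give $\mathds{1}^T A = 0$, i.e.\ $\mathrm{Im}(A)\subseteq \mathds{1}^{\perp}$. Then for the core lemma, suppose $Av=0$ with $\mathds{1}^T v=0$. Multiplying $\gamma\bar Q v = L_c Q v$ on the left by $v^T Q$ produces
\begin{equation}
\gamma\, v^T Q\bar Q v \;=\; (Qv)^T L_c (Qv) \;\geq 0,\nonumber
\end{equation}
since $L_c$ is positive semidefinite. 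A short calculation using \eqref{Qbar} and $\mathds{1}^T v=0$ shows $v^T Q\bar Q v = -\|v\|^2$. Hence $-\gamma\|v\|^2\geq 0$. For $\gamma>0$ this forces $v=0$; for $\gamma=0$, we instead have $L_c Q v = 0$, so $Qv\in\ker(L_c)=\mathrm{span}(\mathds{1})$, i.e.\ $v=\alpha Q^{-1}\mathds{1}$, and $\mathds{1}^T v=0$ combined with $\mathds{1}^T Q^{-1}\mathds{1}>0$ again yields $v=0$.

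With this in hand, the remaining arguments are short. For the two matrices $A+\mathds{1}\mathds{1}^T$ and $A+\tfrac{1}{n}\mathds{1}\mathds{1}^T$: if $(A+c\mathds{1}\mathds{1}^T)v=0$ for $c\in\{1,1/n\}$, then $Av = -c(\mathds{1}^T v)\mathds{1}$, but the left side lies in $\mathrm{Im}(A)\subseteq\mathds{1}^{\perp}$ while the right side lies in $\mathrm{span}(\mathds{1})$; the intersection is $\{0\}$, so $Av=0$ and $\mathds{1}^T v=0$ simultaneously, and the core lemma gives $v=0$. For $\tilde Q^T\tilde Q$, I would observe directly that
\begin{equation}
\tilde Q^T\tilde Q \;=\; A^T A + \mathds{1}\mathds{1}^T,\nonumber
\end{equation}
so its kernel coincides with $\ker(\tilde Q)=\ker(A)\cap\mathds{1}^{\perp}=\{0\}$; equivalently one can note $\tilde Q$ has full column rank, so $\tilde Q^T\tilde Q$ is invertible. (As a byproduct this establishes the identity \eqref{fullrankmatrices} the main proof invokes.)

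The main obstacle is the core step: $A$ is not symmetric (neither $\bar Q$ nor $L_c Q$ commute with $Q$), so a direct quadratic form in $v$ does not land on a PSD quantity. The trick that unlocks everything is the asymmetric multiplication by $v^T Q$, which converts the Laplacian term into $(Qv)^T L_c(Qv)$; once this is set up the sign analysis is immediate, and the two endpoint cases $\gamma>0$ and $\gamma=0$ are dispatched separately using, respectively, strict positivity of $\gamma$ and connectedness of the communication graph.
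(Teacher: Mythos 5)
Your proof is correct, but it takes a genuinely different route from the paper. The paper works with $A_n = L_cQ-\gamma\bar Q$ as a column-diagonally-dominant matrix with non-positive off-diagonal entries and zero column sums, extracts a square submatrix $\tilde Q_{sub}$ of $\tilde Q$, and shows $\det(\tilde Q_{sub})\neq 0$ via a Schur complement, the Gershgorin circle theorem, and the inverse-positivity of $M$-matrices; full rank of the other two matrices is then transferred through the factorization $\tilde Q^T\tilde Q = A_n^TA_n+\mathds{1}\mathds{1}^T=(\mathds{1}\mathds{1}^T-A_n)^T(\tfrac1n\mathds{1}\mathds{1}^T-A_n)$. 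You instead reduce all three claims to the single kernel statement $\ker(A)\cap\operatorname{span}(\mathds{1})^\perp=\{0\}$ (legitimate, since $\mathds{1}^T\bar Q=0$ and $\mathds{1}^TL_c=0$ give $\operatorname{Im}(A)\subseteq\operatorname{span}(\mathds{1})^\perp$, so any kernel vector of $A+c\mathds{1}\mathds{1}^T$, $c\neq0$, must satisfy $Av=0$ and $\mathds{1}^Tv=0$ separately), and you prove that statement by the asymmetric pairing $v^TQ(\cdot)v$, which turns the Laplacian term into the PSD form $(Qv)^TL_c(Qv)$ and the $\bar Q$ term into $-\|v\|^2$ on $\mathds{1}^\perp$; I checked the computation $v^TQ\bar Qv=-\|v\|^2$ and the $\gamma=0$ case via $\ker(L_c)=\operatorname{span}(\mathds{1})$, and both are right. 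Your argument is shorter and avoids the $M$-matrix machinery entirely (it also sidesteps the delicate point in the paper's Gershgorin step, where strict column dominance of the principal submatrix $A_{n-1}$ is not automatic and really requires an irreducibility argument). The trade-off is that your core step uses symmetry and positive semidefiniteness of $L_c$, i.e., it is tied to undirected communication graphs, whereas the paper's sign-pattern/column-sum argument would in principle extend to weight-balanced directed Laplacians; since the lemma is stated for undirected $L_c$, this costs nothing here. One small caveat: you claim identity (\ref{fullrankmatrices}) as a byproduct, but you only derive its first equality $\tilde Q^T\tilde Q=A^TA+\mathds{1}\mathds{1}^T$; the second factorized form additionally needs the expansion using $\mathds{1}^TA=0$, which you have available but should state if you want to claim the full identity.
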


\begin{proof}
We will first proof that all the columns of $\tilde{Q}$ are linearly independent for all $\gamma\in\mathbb{R}_{\geq0}$. From this we will then conclude that $\tilde{Q}^T\tilde{Q}$, $(\gamma \bar{Q}-L_cQ+\mathds{1}\mathds{1}^T)$ and $(\gamma \bar{Q}-L_cQ+\frac{1}{n}\mathds{1}\mathds{1}^T)$ are full rank. Let
\be
A_n :=L_cQ-\gamma\bar{Q}, \label{A_n}
\ee
now, since $\bar{Q}_{ij}\geq0$ and $(L_cQ)_{ij}\leq0$ for all $i \neq j$ we know that the off-diagonal elements of $A_n$ are non-positive. Furthermore, since the graph associated to $L_c$ is strongly connected, we have that $(L_c)_{ii}>0$ and since $\mathds{1}^T(L_cQ-\gamma\bar{Q})=0$, all the diagonal elements of $A_n$ are strictly positive. Therefore we can write
\be
A_n=\left(
    \begin{array}{ccccc}
      a_{11} & -a_{12}  & \dots & -a_{1n} \\
      -a_{21} & a_{22} & \dots& -a_{2n} \\
      \vdots & \vdots  & \ddots & \vdots \\
      -a_{n1} & -a_{n2}  & \dots & a_{nn} \\
    \end{array}
  \right),
\ee
with $a_{ii}>0$ for all $i$ and $a_{ij}\geq0$ for all $i\neq j$. Moreover, since
\be
\mathds{1}^T (\gamma \bar{Q}-L_cQ)=0,
\ee
we can conclude that the diagonal elements are equal to the negative column sum of the off diagonal elements, {\it i.e.} $a_{ii}=\sum_{k=1, k\neq i}^{n}a_{ki}$. We will now prove that $\tilde{Q}$ is full column rank. To this end we consider a square sub-matrix of $\tilde{Q}$ which we define as
\be
\tilde{Q}_{sub}=\left(
    \begin{array}{cc}
      A_{n-1} & -a_{[n]}\\
      \mathds{1}_{n-1}^T & 1\\
    \end{array}
  \right),
\ee
where $a_{[n]}:=\left(
                  \begin{array}{cccc}
                    a_{1n} & a_{2n} & \dots & a_{(n-1)n} \\
                  \end{array}
                \right)^T$. By the Schur complement we know that
\be
\begin{aligned}
\det(\bar{Q}_{sub})=&\det( A_{n-1} +a_{[n]}\mathds{1}_{n-1}^T )\\
=& (1+ \mathds{1}_{n-1}^T A_{n-1}^{-1}a_{[n]})\det(A_{n-1}).
\end{aligned}
\ee
Since $A_{n-1}$ is a diagonal column-dominant matrix we obtain from the Gershgorin circle theorem that all the eigenvalues of $A_{n-1}^T$ are strictly positive. This implies that $\det(A_{n-1}^T)\neq0$ and since $A_{n-1}$ is square we obtain $\det(A_{n-1})\neq0$. Furthermore, again due to the diagonal dominance property of $A_{n-1}$, we have that every principal minor (see {\it e.g.} \cite{fiedler1962matrices} for a definition) of $A_{n-1}$ is positive. This implies that  $A_{n-1}$ is inverse-positive, as is proven in \cite{fiedler1962matrices}. From this it follows that $\mathds{1}_{n-1}^T A_{n-1}^{-1}a_{[n]}>0$ which results in $\det(\tilde{Q}_{sub})\neq0$, implying that all the columns of $\tilde{Q}_{sub}$ are linearly independent. Since the number of columns of $\tilde{Q}$ and $\tilde{Q}_{sub}$ are equal, we can conclude that all the columns of $\tilde{Q}$ are linearly independent for all $\gamma\in\mathbb{R}_{\geq0}$. This, and since $\tilde{Q}^T\tilde{Q}$ is a square matrix, immediately implies that $\tilde{Q}^T\tilde{Q}$ is full rank. Next we use the following identity
\be
\begin{aligned}
  \tilde{Q}^T\tilde{Q} =&A_n^TA_n+\mathds{1}\mathds{1}^T\\
   =&\left(\mathds{1}\mathds{1}^T-A_n\right)^T\left(\frac{1}{n}\mathds{1}\mathds{1}^T-A_n\right)
   \end{aligned}\label{fullrankmatrices}
\ee
where $A_n$ is as in (\ref{A_n}). Since $\tilde{Q}^T\tilde{Q}$ is full rank, it follows directly that also $(\gamma \bar{Q}-L_cQ+\frac{1}{n}\mathds{1}\mathds{1}^T)$ is full rank. In fact, suppose it is not full rank, then there exists a $x\neq0$ such that $(\gamma \bar{Q}-L_cQ+\frac{1}{n}\mathds{1}\mathds{1}^T)x=0$. Due to (\ref{fullrankmatrices}) and (\ref{A_n}) this implies that $\tilde{Q}^T\tilde{Q}x=0$, which is a contradiction with $\tilde{Q}^T\tilde{Q}$ being full rank. Using the same argumentation it follows directly from (\ref{fullrankmatrices}) that also $(\gamma \bar{Q}-L_cQ+\mathds{1}\mathds{1}^T)$ is full rank.
\end{proof}

\begin{lemm}
Let $\bar{Q}$ and $\Phi$ as in (\ref{Qbar}) and (\ref{Phi}), then $\Phi^{-1}$ exists. Furthermore, let
\be
0<\gamma\leq\frac{\theta}{\|\Phi^{-1}\bar{Q}\|},\label{gammatheta}
\ee
for some $0<\theta<1$, and let $\hat{u}_p$, $\hat{u}_e$ and $\hat{x}$ be as in (\ref{uphat}), (\ref{upehat}) and (\ref{hatxstarsolved}), then
\begin{align}
\|\hat{u}_p\|\leq& \gamma \frac{1}{1-\theta} \|\Phi^{-1} \bar{Q}^2 Q \tilde{d}\| \label{boundhatup} \\
\|\hat{u}_e\|\leq& \gamma \frac{1}{1-\theta} \|B^\dagger\| \cdot \|\Phi^{-1} \bar{Q}^2 Q \tilde{d}\| \label{boundBhatue}\\
\begin{split}\|\hat{x}\|\leq&\frac{\gamma_c}{\mathds{1}^TQ^{-1}\mathds{1}}\left(  \| \mathds{1}\mathds{1}^TQ^{-1} \bar{Q} Q \tilde{d}\|\right.\\
&+\frac{\gamma}{1-\theta}\| \mathds{1}\mathds{1}^TQ^{-1}\| \cdot \left.\|\Phi^{-1} \bar{Q}^2 Q \tilde{d}\|\right).
\end{split}\label{boundhatx}
\end{align}
\label{lemma2}
\end{lemm}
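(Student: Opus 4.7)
The plan has three main ingredients: invertibility of $\Phi$ via a previous lemma, a Neumann-series bound for $(\gamma\bar{Q}+\Phi)^{-1}$, and then straight submultiplicative estimates on the three explicit formulas already derived in Lemma~\ref{lemma_bound_gamma}.

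First I would dispose of the invertibility claim. Setting $\gamma=0$ in the first conclusion of Lemma~\ref{lemmaAcolumns} gives that $-L_cQ+\frac{1}{n}\mathds{1}\mathds{1}^T=\Phi$ is full rank, hence $\Phi^{-1}$ exists. Then I would rewrite
\begin{equation}
\gamma\bar{Q}+\Phi \;=\; \Phi\bigl(I+\gamma\,\Phi^{-1}\bar{Q}\bigr),
\nonumber
\end{equation}
so that
\begin{equation}
(\gamma\bar{Q}+\Phi)^{-1} \;=\; \bigl(I+\gamma\,\Phi^{-1}\bar{Q}\bigr)^{-1}\Phi^{-1}.
\nonumber
\end{equation}
Under hypothesis (\ref{gammatheta}) we have $\gamma\|\Phi^{-1}\bar{Q}\|\le\theta<1$, so the Neumann series for $(I+\gamma\Phi^{-1}\bar{Q})^{-1}$ converges and delivers the operator-norm bound $\|(I+\gamma\Phi^{-1}\bar{Q})^{-1}\|\le(1-\theta)^{-1}$. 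This is the one analytic step in the proof; everything else is bookkeeping.

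Next I would unwind the definitions. Substituting $\hat{x}_p$ from (\ref{hatxp}) into (\ref{uphat}) and using $Q^{-1}Q=I$ gives the pleasantly simple identity $\hat{u}_p=\gamma(\gamma\bar{Q}+\Phi)^{-1}\bar{Q}^2 Q\tilde{d}$; likewise (\ref{hatxe}) and (\ref{upehat}) yield $\hat{u}_e=-\gamma B^{\dagger}(\gamma\bar{Q}+\Phi)^{-1}\bar{Q}^2 Q\tilde{d}$. Factoring each as $\gamma(I+\gamma\Phi^{-1}\bar{Q})^{-1}$ times $\Phi^{-1}\bar{Q}^2 Q\tilde{d}$ (possibly premultiplied by $B^{\dagger}$) and applying the submultiplicative inequality, together with the Neumann-series bound, yields (\ref{boundhatup}) and (\ref{boundBhatue}).

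For (\ref{boundhatx}), I would start from (\ref{hatx}) and split it as
\begin{equation}
\hat{x} \;=\; \tfrac{\gamma_c}{\mathds{1}^TQ^{-1}\mathds{1}}\Bigl(\mathds{1}\mathds{1}^TQ^{-1}\bar{Q}Q\tilde{d}-\gamma\,\mathds{1}\mathds{1}^TQ^{-1}(\gamma\bar{Q}+\Phi)^{-1}\bar{Q}^2 Q\tilde{d}\Bigr).
\nonumber
\end{equation}
Applying the triangle inequality to the two summands and then the same Neumann-series estimate to the second one produces exactly (\ref{boundhatx}). No further structure of $L_c$, $B$, or $Q$ is needed beyond what is already guaranteed by Lemma~\ref{lemmaAcolumns}. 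The only potential subtlety is the algebraic reduction of $\hat{u}_p$ and $\hat{u}_e$ to a form where $(\gamma\bar{Q}+\Phi)^{-1}$ appears cleanly; once that is in hand, the rest is a one-line norm estimate for each of the three claims.
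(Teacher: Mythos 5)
Your proposal is correct and follows essentially the same route as the paper's proof: invertibility of $\Phi$ from Lemma~\ref{lemmaAcolumns} at $\gamma=0$ (note it is the conclusion about $\gamma\bar Q - L_cQ + \tfrac{1}{n}\mathds{1}\mathds{1}^T$, i.e.\ $\gamma\bar Q+\Phi$, that you need, not the version with $\mathds{1}\mathds{1}^T$), then the Neumann-series bound $\|(I+\gamma\Phi^{-1}\bar Q)^{-1}\|\le (1-\theta)^{-1}$, and finally submultiplicative estimates on the explicit expressions for $\hat u_p$, $\hat u_e$, $\hat x$. The only cosmetic difference is that the paper derives the Neumann series from an explicit finite partial-sum identity rather than citing it outright.
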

\begin{proof}
First we prove that $\Phi^{-1}$ exists. From Lemma \ref{lemmaAcolumns} it follows directly that $(\gamma \bar{Q}+\Phi)$ is invertible for any $\gamma\geq0$. By taking $\gamma=0$ it follows that $\Phi^{-1}$ exists. Next we prove that (\ref{boundhatup})-(\ref{boundhatx}) holds.
To do this we make use of the following identity
\be
\begin{aligned}
&\sum_{k=0}^N(-\gamma\Phi^{-1}\bar{Q})^k(I+\gamma \Phi^{-1}\bar{Q}) \\
=&I+(-1)^N(\gamma \Phi^{-1}\bar{Q})^{N+1}.
\end{aligned}\label{sumN}
\ee
Due to (\ref{gammatheta}) we have
\be
\|\gamma\Phi^{-1}\bar{Q}\|<1, \label{normlesthenone}
\ee
which implies that $I+\gamma\Phi^{-1}\bar{Q}$ is invertible. That is, suppose that $I+\gamma\Phi^{-1}\bar{Q}$ is not invertible, then there exists a non-zero $x$ such that $(I+\gamma\Phi^{-1}\bar{Q})x=0$. In such a case $0\leq \|x\|(1 - \|\gamma\Phi^{-1}\bar{Q}\|) $, which contradicts (\ref{normlesthenone}).
%
Then, after lengthy but standard arguments, (\ref{sumN}) and (\ref{normlesthenone}) imply that
\be
\sum_{k=0}^\infty(-\gamma\Phi^{-1}\bar{Q})^k=(I+\gamma \Phi^{-1}\bar{Q})^{-1}, \label{infsum}
\ee
from which we obtain, together with (\ref{gammatheta}), that
\be
\begin{aligned}
\|(I+\gamma \Phi^{-1}\bar{Q})^{-1}\|\leq&\sum_{k=0}^\infty\|(-\gamma\Phi^{-1}\bar{Q})\|^k\\
\leq&\sum_{k=0}^\infty\theta^k.
\end{aligned}\label{infsum2}
\ee
Notice that the right  hand side of (\ref{infsum2}) is a standard geometric series and this implies that
\be
\|\left(\gamma\Phi^{-1}\bar{Q}+I\right)^{-1}\|\leq\frac{1}{1-\theta}.  \label{bound1}
\ee
Combining (\ref{bound1}) with (\ref{uphat}) and (\ref{hatxp}) gives us that
\begin{align}
\|\hat{u}_p\|=&\|-\gamma \left(\gamma\bar{Q}+\Phi\right)^{-1} \bar{Q}^2 Q   \tilde{d} \|\\
\leq&\gamma\|\left(\gamma\Phi^{-1}\bar{Q}+I\right)^{-1}\| \cdot \|\Phi^{-1} \bar{Q}^2 Q \tilde{d}\|  \\
\leq&\gamma\frac{1}{1-\theta}\|\Phi^{-1} \bar{Q}^2 Q \tilde{d}\|,
\end{align}
which proves (\ref{boundhatup}). Similarly, combining (\ref{bound1}) with (\ref{upehat}) and (\ref{hatxe})
give us (\ref{boundBhatue}).
Finally, again using (\ref{bound1}) and combining this with (\ref{hatx}), we obtain
\be
\begin{aligned}
\|\hat{x}\|=&\| -\gamma_c\frac{\mathds{1}\mathds{1}^TQ^{-1}}{\mathds{1}^TQ^{-1}\mathds{1}} \cdot \\
&\left(I- \gamma\left(\gamma\bar{Q}+\Phi\right)^{-1}\bar{Q} \right)  \bar{Q} Q  \tilde{d}\|\\
\leq& \frac{\gamma\gamma_c}{\mathds{1}^TQ^{-1}\mathds{1}}\| \mathds{1}\mathds{1}^TQ^{-1}\left(\gamma\bar{Q}+\Phi\right)^{-1}  \bar{Q}^2 Q  \tilde{d}\|\\
&+\frac{\gamma_c}{\mathds{1}^TQ^{-1}\mathds{1}}\| \mathds{1}\mathds{1}^TQ^{-1}  \bar{Q} Q \tilde{d}\|\\
\leq&\frac{\gamma\gamma_c}{\mathds{1}^TQ^{-1}\mathds{1}}\frac{1}{1-\theta}\| \mathds{1}\mathds{1}^TQ^{-1}\| \cdot \|\Phi^{-1}  \bar{Q}^2 Q \tilde{d}\|\\
&+ \frac{\gamma_c}{\mathds{1}^TQ^{-1}\mathds{1}}\| \mathds{1}\mathds{1}^TQ^{-1}  \bar{Q} Q \tilde{d}\|,
\end{aligned}
\ee
which implies (\ref{boundhatx}) and concludes the proof.
\end{proof}

\begin{lemm}\label{lemma3}
Let $\hat{u}_p$, $\hat{u}_e$ be as in (\ref{uphat}), (\ref{upehat}) and let $\hat{x}$ be as in (\ref{xstar}) and (\ref{hatxstarsolved}). If $\gamma_c$, $\gamma_p$ and $\gamma_l$ are such that for $0<\theta<1$,
\begin{subequations}
\begin{align}
 &\gamma_c<\frac{\mathds{1}^TQ^{-1}\mathds{1}\epsilon_1 } {\| \mathds{1}\mathds{1}^TQ^{-1} \bar{Q} Q \tilde{d}\| + \| \mathds{1}\mathds{1}^TQ^{-1}\| \epsilon_2}\label{boundgammac}\\
  &\|\Phi^{-1} \bar{Q}^2 Q \tilde{d}\|  \frac{\gamma_p^2}{\gamma_l} < \frac{1-\theta}{\gamma_c}\min\left\{\delta_p, \delta_e, \delta_\theta,\epsilon_2 \right\},\label{boundgamma}
\end{align}\label{boundgammaandgammac}
\end{subequations}
with $\tilde{d}$, $\bar{Q}$ and $\Phi$ as in (\ref{tilded})-(\ref{Phi}) and
\begin{align}
 \delta_p =&\min\{\min_{i}\{
(u_p^+-\bar{u}_p)_i\},\min_{j}\{
(\bar{u}_p-u_p^-)_j\}\}\label{deltap}\\
 \delta_e=& \frac{1}{\left|\left|B^\dagger\right|\right|}
 \min\{\min_{i}\{(u_e^+-\bar{u}_e)_i\},\min_{j}\{(\bar{u}_e)_j\}\} \label{deltae}\\
 \delta_\theta=&\left\{\begin{array}{lr} \frac{\|\Phi^{-1} \bar{Q}^2 Q \tilde{d}\|}{\|\Phi^{-1}\bar{Q}\| }\frac{\theta}{(1-\theta) } & \text{if } \|\Phi^{-1}\bar{Q}\|\neq0 \\ +\infty & \text{if } \|\Phi^{-1}\bar{Q}\|=0 \end{array}
 \right.\label{delta_theta}.
\end{align}
then
\begin{align}
\begin{split}
\|\hat{u}_p\|<& \min\{\min_{i}\{(u_p^+-\bar{u}_p)_i\},\\
& \quad \quad \min_{j}\{(\bar{u}_p-u_p^-)_j\},\epsilon_2\}
\end{split}\label{boundonhatup} \\
\|\hat{x}\|<&\text{ } \epsilon_1 \label{boundonhatx}\\
\|\hat{u}_e\|< & \min\{\min_{i}\{(u_e^+-\bar{u}_e)_i\},\min_{j}\{(\bar{u}_e)_j\}\}.\label{boundonhatue}
\end{align}
\end{lemm}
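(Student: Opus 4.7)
The plan is to apply Lemma \ref{lemma2} to the hypothesised inequalities (\ref{boundgammac})--(\ref{boundgamma}) and to derive the three target bounds by a direct chain of substitutions. The preliminary step is to verify that the hypothesis $\gamma\leq\theta/\|\Phi^{-1}\bar{Q}\|$ of Lemma \ref{lemma2}, with $\gamma:=\gamma_p^2\gamma_c/\gamma_l$, is a consequence of (\ref{boundgamma}). To see this I would multiply both sides of (\ref{boundgamma}) by $\gamma_c$ and specialise the minimum on the right-hand side to the entry $\delta_\theta$ defined in (\ref{delta_theta}); after cancellation this produces exactly $\gamma<\theta/\|\Phi^{-1}\bar{Q}\|$ (with the case $\|\Phi^{-1}\bar{Q}\|=0$ being vacuously satisfied, since by convention $\delta_\theta=+\infty$). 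Consequently the bounds (\ref{boundhatup})--(\ref{boundhatx}) of Lemma \ref{lemma2} are all at my disposal.

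Once Lemma \ref{lemma2} is in force, the rest is bookkeeping. Multiplying (\ref{boundgamma}) by $\gamma_c$ and dividing by $(1-\theta)$ yields the master inequality $\tfrac{\gamma}{1-\theta}\|\Phi^{-1}\bar{Q}^2Q\tilde{d}\|<\min\{\delta_p,\delta_e,\delta_\theta,\epsilon_2\}$, which in particular bounds the left-hand side above by each of $\delta_p$, $\delta_e$ and $\epsilon_2$ individually. Combining with (\ref{boundhatup}) gives at once $\|\hat{u}_p\|<\delta_p$ and $\|\hat{u}_p\|<\epsilon_2$, which together constitute (\ref{boundonhatup}). Combining instead with (\ref{boundBhatue}) gives $\|\hat{u}_e\|<\|B^\dagger\|\delta_e$, which by the definition of $\delta_e$ in (\ref{deltae}) is precisely (\ref{boundonhatue}).

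For (\ref{boundonhatx}) I would substitute the $\epsilon_2$-branch of the master inequality into the bound (\ref{boundhatx}) supplied by Lemma \ref{lemma2}, obtaining the estimate $\|\hat{x}\|<\tfrac{\gamma_c}{\mathds{1}^TQ^{-1}\mathds{1}}\bigl(\|\mathds{1}\mathds{1}^TQ^{-1}\bar{Q}Q\tilde{d}\|+\|\mathds{1}\mathds{1}^TQ^{-1}\|\epsilon_2\bigr)$. The hypothesis (\ref{boundgammac}) is exactly the inequality which forces this right-hand side to be strictly less than $\epsilon_1$, closing the argument.

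Since every step is a direct algebraic manipulation of inequalities already supplied by Lemma \ref{lemma2}, I do not expect any genuine obstacle; the proof is really a rewriting exercise. The only subtlety worth flagging is the case split in the definition of $\delta_\theta$: when $\|\Phi^{-1}\bar{Q}\|$ vanishes, the Neumann-series argument inside Lemma \ref{lemma2} trivialises and the upper bound on $\gamma$ disappears, which must be tracked consistently with the convention $\delta_\theta=+\infty$ so that (\ref{boundgamma}) imposes no constraint from that term.
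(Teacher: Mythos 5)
Your proposal is correct and follows essentially the same route as the paper: invoke Lemma \ref{lemma2}, then peel off each entry of the minimum in (\ref{boundgamma}) to obtain (\ref{boundonhatup}), (\ref{boundonhatue}), and finally feed the $\epsilon_2$ bound on $\|\hat{u}_p\|$ into (\ref{boundhatx}) together with (\ref{boundgammac}) to get (\ref{boundonhatx}). You are in fact slightly more explicit than the paper in verifying that (\ref{gammatheta}) follows from the $\delta_\theta$ term of (\ref{boundgamma}), a step the paper merely asserts.
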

\begin{proof}
In order to prove this, we make use of Lemma \ref{lemma2}, where we note that (\ref{gammatheta}) is satisfied due to (\ref{boundgamma}). To prove (\ref{boundonhatup}) we combine (\ref{boundhatup}) and (\ref{boundgamma}) such that
\begin{align}
\|\hat{u}_p\|\leq& \gamma \frac{1}{1-\theta} \left|\left|\Phi^{-1} \bar{Q}^2 Q \tilde{d}\right|\right| \\
\begin{split}
<&\min\{\min_{i}\{(u_p^+-\bar{u}_p)_i\}, \\
&\quad \quad \min_{j}\{(\bar{u}_p-u_p^-)_j\},\epsilon_2\}.
\end{split}\label{hatpboundepsilon2}
\end{align}
Using  (\ref{hatpboundepsilon2}) with (\ref{boundgammac}) and (\ref{boundhatx}) gives us
\begin{align}
\|\hat{x}\|\leq&\frac{\gamma_c}{\mathds{1}^TQ^{-1}\mathds{1}}\left(\| \mathds{1}\mathds{1}^TQ^{-1} \bar{Q} Q \tilde{d}\|\right. \nonumber\\
&\quad +\left.\| \mathds{1}\mathds{1}^TQ^{-1}\|\epsilon_2\right)\\
<&\epsilon_1, \nonumber
\end{align}
which implies (\ref{boundonhatx}). Lastly, from (\ref{boundgamma}) and (\ref{boundBhatue}) we have that
\be\label{boundBhatue2}
\begin{aligned}
\|\hat{u}_e\|<&\left|\left|B^\dagger\right|\right|\delta_e\\
\leq& \min\{\min_{i}\{(u_e^+-\bar{u}_e)_i\},\min_{j}\{(\bar{u}_e)_j\}\},
\end{aligned}
\ee
 with $\delta_e$ as in (\ref{deltae}). This implies (\ref{boundonhatue}) and concludes the proof.
\end{proof}

\begin{rema}\label{remarkgains}
To guarantee that Theorem \ref{maintheorem} solves Problem \ref{problem2}, a sufficient condition for $\gamma_c>0$, $\gamma_p>0$ and $\gamma_l>0$ is that they satisfy (\ref{boundgammaandgammac}). Note that these gains can always be found since $\delta_e$, $\delta_p$ and $\delta_\theta$ are all strictly positive due to the feasibility condition. Moreover, in the special case that all nodes supply their own demand ({\it i.e.,} if $\bar{Q} Q \tilde{d}=0$), (\ref{boundgamma}) is satisfied for any $\gamma_p$ and $\gamma_l$. Since $\gamma_c$ acts as the proportional feedback in (\ref{contesat}) and has to be chosen sufficiently small due to (\ref{boundgammac}), a smaller steady state error comes at the cost of a lower convergence rate. Although the controller is fully distributed, global information of the topology, cost functions, disturbance bounds and saturation bounds are required to guarantee bounds on the deviation from the optimal steady state. It is easy to show that a $\gamma_c>0$, $\gamma_p>0$ and $\gamma_l>0$ can be found such that (\ref{boundgammaandgammac}) is satisfied for all the disturbances whose magnitude belongs to a compact interval of values.
\end{rema}

\begin{lemm}
Let ${\bar{x}}_p$ be as in (\ref{barx_def}) and let $\bar{x}_e$, ${\hat{x}}_p$ and $\hat{x}_e$ be the solutions to (\ref{Bbarxe}) and (\ref{hatsss}). If all the conditions of Lemma \ref{lemma3} are satisfied,
then
\begin{align}
x_e^-&<0 \quad \quad x_e^+>0 \label{sateboundsposmin}\\
x_p^-&<0 \quad \quad x_p^+>0, \label{satpboundsposmin}
\end{align}
with $x_e^-$, $x_e^+$, $x_p^-$ and $x_p^+$ as defined in
(\ref{xemindef})-(\ref{xpmaxdef}).
\label{bounds_not_sat}
\end{lemm}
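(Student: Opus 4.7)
The plan is to rewrite each of $x_e^\pm$ and $x_p^\pm$ in terms of $\bar{u}_p,\bar{u}_e,\hat{u}_p,\hat{u}_e$ using the definitions (\ref{updef})--(\ref{upedef}) and (\ref{uphat})--(\ref{upehat}), and then to combine the feasibility condition in Definition \ref{matchingcondition} with the error bounds (\ref{boundonhatup}) and (\ref{boundonhatue}) provided by Lemma \ref{lemma3} componentwise.

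First I would invert (\ref{updef})--(\ref{upedef}) to get $\bar{x}_p=\frac{1}{\gamma_p}(Q\bar{u}_p+r)$ and $\bar{x}_e=-\frac{1}{\gamma_e}\bar{u}_e$, and similarly (\ref{uphat})--(\ref{upehat}) to obtain $\hat{x}_p=\frac{1}{\gamma_p}Q\hat{u}_p$ and $\hat{x}_e=-\frac{1}{\gamma_e}\hat{u}_e$. Substituting into (\ref{xemindef})--(\ref{xpmaxdef}) yields the clean identities
\begin{equation}
x_e^- = -(\bar{u}_e+\hat{u}_e), \qquad x_e^+ = (u_e^+-\bar{u}_e)-\hat{u}_e,\nonumber
\end{equation}
\begin{equation}
x_p^\pm = \tfrac{1}{\gamma_p} Q\bigl(u_p^\pm-\bar{u}_p-\hat{u}_p\bigr).\nonumber
\end{equation}
Since $Q$ is a positive diagonal matrix, the signs of $x_p^\pm$ coincide with those of $u_p^\pm-\bar{u}_p-\hat{u}_p$, so the four componentwise inequalities to verify are $\bar{u}_e+\hat{u}_e>0$, $u_e^+-\bar{u}_e-\hat{u}_e>0$, $\bar{u}_p+\hat{u}_p > u_p^-$ and $u_p^+ > \bar{u}_p+\hat{u}_p$.

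The key step is to bound each component of $\hat{u}_p$ and $\hat{u}_e$ by the corresponding Euclidean norm, since for any vector $v$ one has $|v_i|\leq \|v\|$ for every $i$. Applying Lemma \ref{lemma3} then gives $|(\hat{u}_p)_i| < \min_j\{(u_p^+-\bar{u}_p)_j,(\bar{u}_p-u_p^-)_j\}\leq \min\{(u_p^+-\bar{u}_p)_i,(\bar{u}_p-u_p^-)_i\}$ and analogously $|(\hat{u}_e)_i| < \min\{(u_e^+-\bar{u}_e)_i,(\bar{u}_e)_i\}$. Combined with the strict feasibility inequalities (\ref{ineqmachtingcondition})--(\ref{u_e^+min}), the four desired componentwise bounds follow directly: for instance,
\begin{equation}
(\bar{u}_p+\hat{u}_p-u_p^-)_i \geq (\bar{u}_p-u_p^-)_i - |(\hat{u}_p)_i| > 0,\nonumber
\end{equation}
and the other three cases are identical in spirit. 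This establishes (\ref{sateboundsposmin}) and (\ref{satpboundsposmin}).

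There is no real obstacle here: the lemma is a bookkeeping consequence of Lemma \ref{lemma3} and the feasibility condition. The only care needed is the passage from a norm bound on $\hat{u}_p,\hat{u}_e$ to a componentwise comparison against the per-component gaps to the saturation bounds, which is immediate because each gap on the right-hand side of (\ref{boundonhatup}) and (\ref{boundonhatue}) is taken as a minimum over all indices, so it is dominated by the $i$-th gap for every $i$.
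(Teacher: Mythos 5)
Your proof is correct and follows essentially the same route as the paper: both arguments combine the norm bounds on $\hat{u}_p$ and $\hat{u}_e$ from Lemma \ref{lemma3} with the strict feasibility inequalities (\ref{ineqmachtingcondition})--(\ref{u_e^+min}) and then translate back through (\ref{updef})--(\ref{upedef}) and (\ref{uphat})--(\ref{upehat}) into the definitions (\ref{xemindef})--(\ref{xpmaxdef}). Your version is in fact slightly more careful than the paper's in making explicit the passage from the Euclidean norm bound to the componentwise estimate via $|v_i|\leq\|v\|$ and in noting that positivity of $Q$ preserves the signs of $x_p^\pm$.
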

\begin{proof}
From Lemma \ref{lemma3} we get that
\be
\|\hat{u}_p\|< \min\{\min_{i}\{(u_p^+-\bar{u}_p)_i\},\min_{j}\{(\bar{u}_p-u_p^-)_j\},
\ee
and
\be
\|\hat{u}_e\|< \min\{\min_{i}\{(u_e^+-\bar{u}_e)_i\},\min_{j}\{(\bar{u}_e)_j\}\}.
\ee
This, together with (\ref{ineqmachtingcondition}) and (\ref{u_e^+min}), implies that
\be
u_p^- < \hat{u}_p+\bar{u}_p < u_p^+-\bar{u}_p
\ee
\be
0 < \hat{u}_e+\bar{u}_e < u_e^+-\bar{u}_e,
\ee
and due to (\ref{updef}) and (\ref{upedef}) we get
\begin{align}
u_p^-<&\gamma_pQ^{-1}(\bar{x}_p+\hat{x}_p)-r< u_p^+\\
0<&-\gamma_e(\bar{x}_e+\hat{x}_e)< u_e^+.
\end{align}
In light of (\ref{xemindef})-(\ref{xpmaxdef}) we can conclude that (\ref{sateboundsposmin}) and (\ref{satpboundsposmin}) are satisfied, which concludes the proof.
\end{proof}

\begin{lemm} Let all the conditions of Theorem \ref{maintheorem} be satisfied. Given the Lyapunov function
\begin{align}
V(\tilde{x},\tilde{x}_e,\tilde{x}_p)=& \medspace \frac{1}{2}\|\tilde{x}\|^2+\sum_{i=1}^n S^p_i +\sum_{i=1}^m S^e_i, \label{lyapunovfunction}
\end{align}
where
\be
\begin{aligned}
S^p_i:=\int\limits_0^{(\tilde{x}_p)_i} &\emph{sat}( y,(\frac{1}{\gamma_p}(Qu_p^-+r)-({\bar{x}}_p+{\hat{x}}_p))_i,\\
&(\frac{1}{\gamma_p}(Qu_p^++r)-({\bar{x}}_p+{\hat{x}}_p))_i)dy,
\end{aligned} \label{Sp}
\ee
and
\be
\begin{aligned}
S^e_i:=\frac{1}{\gamma_e^2}\int\limits_0^{-\chi_i}&\emph{sat}(y,(-\gamma_e(\bar{x}_e+{\hat{x}}_e)))_i,\\
&(u_e^+-\gamma_e(\bar{x}_e+{\hat{x}}_e)))_i)dy,
\end{aligned} \label{Se}
\ee
with $\chi=\gamma_e\tilde{x}_e+\gamma_cB^T\tilde{x}$, then
\be
\dot{V}(\tilde{x},\tilde{x}_e,\tilde{x}_p)\leq0,\label{vdotleqzerogoal}
\ee
and the set
\be
\begin{aligned}
\mathcal{Q}=&\{(\tilde{x},\tilde{x}_e,\tilde{x}_p)|V(\tilde{x},\tilde{x}_e,\tilde{x}_p)\leq D)\},
\end{aligned}
\label{set_S}
\ee
with $D\geq0$, is nonempty, compact and forward invariant for system (\ref{sat_dynamics}).
\label{lemma1}
\end{lemm}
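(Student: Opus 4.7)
The plan is to establish in order that $V$ is nonnegative and continuous, that $V$ is radially unbounded, and that $\dot V \leq 0$ along (\ref{sat_dynamics}); the three stated properties of $\mathcal{Q}$ will then follow at once.

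By Lemma \ref{bounds_not_sat}, every saturation interval $[(x_p^-)_i,(x_p^+)_i]$ and $[(x_e^-)_i,(x_e^+)_i]$ contains the origin strictly in its interior. Each integrand in (\ref{Sp}) and (\ref{Se}) is therefore a saturation passing through zero and sharing the sign of its argument, so the integral is nonnegative whatever the sign of its upper limit. Hence $S^p_i \geq 0$ and $S^e_i \geq 0$, so $V \geq 0$ with $V(0,0,0)=0$, and continuity is immediate. Radial unboundedness I would argue coordinate by coordinate: the quadratic term dominates in $\tilde x$; each $S^p_i$ grows at least linearly in $|(\tilde x_p)_i|$ once $(\tilde x_p)_i$ leaves $[(x_p^-)_i,(x_p^+)_i]$; and, since the map $\tilde x_e \mapsto \chi = \gamma_e\tilde x_e + \gamma_c B^T\tilde x$ is an affine bijection for fixed $\tilde x$, each $S^e_i$ grows at least linearly in $|(\tilde x_e)_i|$ for $\|\tilde x_e\|$ large enough to saturate.

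Next I differentiate $V$ along (\ref{sat_dynamics}). The fundamental theorem of calculus applied to $S^p_i$ and $S^e_i$ gives
\begin{equation*}
\dot V = \tilde x^T \dot{\tilde x} + \text{sat}_p(\tilde x_p)^T \dot{\tilde x}_p - \frac{1}{\gamma_e^2}\,\text{sat}_e(\tilde x,\tilde x_e)^T \dot\chi,
\end{equation*}
with $\dot\chi = \gamma_c B^T\dot{\tilde x} + \gamma_e\dot{\tilde x}_e = \gamma_e^2 B^T\tilde x + \gamma_c B^T B\,\text{sat}_e + \gamma_c\gamma_p B^T Q^{-1}\text{sat}_p$. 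After substituting the right-hand sides from (\ref{sat_dynamics}) and collecting terms, three cancellations should appear: the $\tilde x$--$\text{sat}_e$ cross terms cancel because the $\gamma_e^2$ factor in $\dot\chi$ is matched by the prefactor $1/\gamma_e^2$; the $\tilde x$--$\text{sat}_p$ cross terms cancel by symmetry of $Q^{-1}$; and the $\text{sat}_p$--$\text{sat}_e$ cross terms generated by the correction $\gamma_p\gamma_c Q^{-1}B\,\text{sat}_e$ in $\dot{\tilde x}_p$ are tuned precisely to cancel the mirror cross terms produced by $\gamma_c\gamma_p B^T Q^{-1}\text{sat}_p$ inside $\dot\chi$. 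What survives is the sign-definite expression $\dot V = -\gamma_l\|B_c^T\text{sat}_p(\tilde x_p)\|^2 - (\gamma_c/\gamma_e^2)\|B\,\text{sat}_e(\tilde x,\tilde x_e)\|^2 \leq 0$, which establishes (\ref{vdotleqzerogoal}).

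The three properties of $\mathcal{Q}$ are then immediate. Nonemptiness holds for every $D\geq 0$ because $V(0,0,0)=0$, so $(0,0,0)\in\mathcal{Q}$. Compactness follows because $V$ is continuous (so $\mathcal{Q}$ is closed) and radially unbounded (so $\mathcal{Q}$ is bounded). Forward invariance is a direct consequence of $\dot V\leq 0$: along any trajectory of (\ref{sat_dynamics}) initialized in $\mathcal{Q}$, $V$ is nonincreasing and hence the trajectory cannot leave $\mathcal{Q}$. The main technical obstacle I anticipate is the careful bookkeeping of all the cross terms in $\dot V$, specifically verifying that the correction $\gamma_p\gamma_c Q^{-1} B u_e$ introduced in (\ref{dotxpsat}) and the prefactor $1/\gamma_e^2$ chosen in (\ref{Se}) are exactly what is needed for the $\text{sat}_p$--$\text{sat}_e$ and $\tilde x$--$\text{sat}_e$ couplings to vanish. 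This is the quantitative reason, flagged in the remark after (\ref{Upsatcontroller}), why that correction is essential rather than cosmetic for the Lyapunov analysis.
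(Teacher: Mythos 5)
Your proposal is correct and follows essentially the same route as the paper's proof: nonnegativity of the $S^p_i$ and $S^e_i$ via Lemma \ref{bounds_not_sat}, differentiation along (\ref{sat_dynamics}) with the cross terms cancelling to leave $-\gamma_l\|B_c^T\text{sat}_p(\tilde x_p)\|^2$ minus a positive multiple of $\|B\,\text{sat}_e(\tilde x,\tilde x_e)\|^2$, and then forward invariance, compactness and nonemptiness exactly as you argue. The one point worth double-checking is the power of $\gamma_e$: with the prefactor $1/\gamma_e^2$ in (\ref{Se}) the $\tilde x$--$\text{sat}_e$ and $\text{sat}_p$--$\text{sat}_e$ cancellations cannot both be exact unless $\gamma_e=1$, but this bookkeeping ambiguity is already present in the paper's own stated partial derivatives and does not alter the structure of the argument.
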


\noindent\hspace{2em}{\itshape Proof \footnote{This proof is an extension of a proof presented in \cite{wei2013load}. The proof in that paper does not consider the dynamics of $x_p$ nor an input at the node with associated cost function, {\it i.e. }$x_p=0$, $Q=0$ and $r=0$.}: }
We first prove (\ref{vdotleqzerogoal}), then we will show that $\mathcal{Q}$ is forward invariant and finally we prove that $\mathcal{Q}$ is compact and non-empty. By evaluating the partial derivatives of (\ref{lyapunovfunction}), we see that
 \begin{equation}
\begin{aligned}
 \frac{\partial V}{\partial\tilde{x}}&=\tilde{x}^T  -\gamma_c\text{sat}_e(\tilde{x},\tilde{x}_e)^TB^T \\
\frac{\partial V}{\partial\tilde{x}_p}&=\text{sat}_p(\tilde{x}_p)^T\\
\frac{\partial V}{\partial\tilde{x}_e}&=-\frac{1}{\gamma_e}\text{sat}_{ e }(\tilde{x},\tilde{x}_e)^T,
\end{aligned}
\end{equation}
with $\text{sat}_e(\tilde{x},\tilde{x}_e)$ and $\text{sat}_p(\tilde{x}_p)$ as defined in (\ref{sate}) and (\ref{satp}), respectively. Hence, with the help of Lemma \ref{lemma_closedloop}, it is easy to see that
\be
\dot{V}= -\gamma_c\|B\text{sat}_e(\tilde{x},\tilde{x}_e)\|^2-\gamma_l\|B_c^T\text{sat}_p(\tilde{x}_p)\|^2 \label{vdotresultsat}
\ee
where $B_c$ is the incidence matrix associated to the communication graph. From (\ref{vdotresultsat}) it is easy to see that (\ref{vdotleqzerogoal}) is satisfied, which directly implies that $\mathcal{Q}$ is forward invariant.

Finally we will prove that (\ref{set_S}) is compact. Note that this is equivalent to $\mathcal{S}$ being closed and bounded. From the definition of $\mathcal{S}$ it follows trivially that it closed which leaves us with the proof that (\ref{set_S}) is bounded.

By Lemma \ref{bounds_not_sat} we know that there exists an open ball that contains the origin that lies within the bounds of the saturation functions in (\ref{Sp}) and (\ref{Se}). Notice that this implies that
$ S^p_i\geq0$ and $ S^e_j\geq0$ for all $i$ and $j$. Now suppose that $|\tilde{x}_i|\rightarrow \infty$, then necessarily $V(\tilde{x},\tilde{x}_e,\tilde{x}_p)\rightarrow \infty$, however this is in contradiction with (\ref{vdotresultsat}) implying that $\tilde{x}$ is bounded. Now suppose that $|(\tilde{x}_p)_i|\rightarrow \infty$, then necessarily $ S^p_i \rightarrow \infty$ due to (\ref{satpboundsposmin}). This implies again that $V(\tilde{x},\tilde{x}_e,\tilde{x}_p)\rightarrow \infty$ from which we can conclude that $\tilde{x}_p$ is bounded. Lastly we prove that $\tilde{x}_e$ is bounded. Suppose that
$|(\tilde{x}_e)_i|\rightarrow \infty$ then also $|-(\gamma_e(\tilde{x}_e)+\gamma_cB\tilde{x})_i|\rightarrow \infty$ since $\tilde{x}$ is bounded. This, together with (\ref{sateboundsposmin}) implies that $S^e_j \rightarrow \infty$. Therefore also $\tilde{x}_e$ is bounded and we can therefore conclude that $\mathcal{Q}$ is compact.
Lastly we prove that $\mathcal{Q}$ is non-empty. Note that $V(0,0,0)=0$, this implies that the origin is contained in $\mathcal{Q}$, which concludes the proof.
\endproof
\end{document}